\documentclass{article}
\usepackage[utf8]{inputenc}
\usepackage{amsthm, amssymb, mathtools, hyperref, cases, enumerate, xcolor, accents, authblk, mathrsfs}
\usepackage[inline]{enumitem}

\usepackage[backend=biber]{biblatex}
\AtEveryBibitem{\clearlist{language}}
\addbibresource{planar delta-SN. bib}

\numberwithin{equation}{section}
\newtheorem{thm}{Theorem}[section]
\newtheorem{prop}[thm]{Proposition}
\newtheorem{lem}[thm]{Lemma}

\newtheorem{rmk}[thm]{Remark}
\theoremstyle{definition}

% Neww
%

% Sets
\newcommand{\nat}{\mathbb{N}}
\newcommand{\real}{\mathbb{R}}

\newcommand{\sphere}{\mathscr{S}}

% Complex numbers
\newcommand{\complex}{\mathbb{C}}
\newcommand{\iu}{\mathrm{i}}

% Operators
\newcommand{\op}{- \Delta_\alpha}
\newcommand{\uop}{- \accentset{\circ}{\Delta}_\alpha}

% Abbreviations
\newcommand{\VS}{\mathrm{VS}}
\newcommand{\GN}{\mathrm{GN}}
\newcommand{\HLS}{\mathrm{HLS}}

% mathcal

\newcommand{\C}{\mathcal{C}}
\newcommand{\E}{\mathcal{E}}

\newcommand{\calH}{\mathcal{H}}

\newcommand{\Q}{\mathcal{Q}}
\newcommand{\V}{\mathcal{V}}

% Notation
\newcommand{\eps}{\varepsilon}

\newcommand{\loc}{\mathrm{loc}}
\newcommand{\dif}{\mathrm{d}}

\DeclareMathOperator{\Dom}{Dom}

% Delimitadores
\delimitershortfall=-1pt
\DeclarePairedDelimiter{\abs}{\lvert}{\rvert}
\DeclarePairedDelimiter{\norm}{\lVert}{\rVert}
\DeclarePairedDelimiter{\parens}{(}{)}
\DeclarePairedDelimiter{\set}{\{}{\}}
\DeclarePairedDelimiter{\brackets}{\lbrack}{\rbrack}

\DeclarePairedDelimiter{\angles}{\langle}{\rangle}
%%c for Closed
%%o for Open
%%i for Interval

\DeclarePairedDelimiter{\coi}{\lbrack}{\lbrack}
\DeclarePairedDelimiter{\oci}{\rbrack}{\rbrack}
\DeclarePairedDelimiter{\ooi}{\rbrack}{\lbrack}

\title{Ground states of the planar nonlinear Schrödinger--Newton system with a point interaction}
\begin{document}

\author{Gustavo de Paula Ramos\thanks{gpramos@icmc.usp.br}}
\affil{Instituto de Ciências Matemáticas e de Computação, Universidade de São Paulo, Avenida Trabalhador São-Carlense, 400, 13566-590 São Carlos SP, Brazil}

\date{\today}
\maketitle
\begin{abstract}
\sloppy
We establish sufficient conditions for the existence of ground states of the following  normalized nonlinear Schrödinger--Newton system with a point interaction:
\[
\begin{cases}
- \Delta_\alpha u
=
w u
+
\beta u |u|^{p - 2}
&\text{on} ~ \mathbb{R}^2;
\\
- \Delta w
=
2 \pi |u|^2
&\text{on} ~ \mathbb{R}^2;
\\
\|u\|_{L^2}^2 = c,
\end{cases}
\]
where $p > 2$; $\alpha, \beta \in \real$ and
$- \Delta_\alpha$ denotes the Laplacian of point interaction with scattering length
$(- 2 \pi \alpha)^{- 1}$.
Additionally, we show that critical points of the corresponding constrained energy functional are naturally associated with standing waves of the evolution problem
\[
\mathrm{i} \psi' (t)
=
- \Delta_\alpha \psi (t)
-
(\log |\cdot| \ast |\psi (t)|^2)
\psi (t)
-
\beta
\psi \parens{t}
|\psi (t)|^{p - 2}.
\]

\smallskip
\sloppy \noindent \textbf{Keywords.} nonlinear Schrödinger equation, zero-range potential, delta interaction, normalized solutions
\end{abstract}
\tableofcontents

\section{Introduction}

\subsection{Considered problem, context and motivation}
\label{sect:problem}
This paper is concerned with the following constrained nonlinear Schrödinger--Newton system with a point interaction:
\begin{equation}
\label{eqn:delta-SN}
\begin{cases}
\op u
=
w u
+
\beta u \abs{u}^{p - 2}
&\text{on} ~ \mathbb{R}^2;
\\
- \Delta w = 2 \pi \abs{u}^2
&\text{on} ~ \mathbb{R}^2;
\\
\norm{u}_{L^2}^2 = c,
\end{cases}
\end{equation}
where $p > 2$; $\alpha, \beta \in \real$;
$\op$ denotes the Laplacian of point interaction with scattering length
$\parens{- 2 \pi \alpha}^{- 1}$
and we want to solve for
$u \colon \real^2 \to \complex$,
$w \colon \real^2 \to \coi{0, \infty}$.

\sloppy
Let us briefly explain what we mean by a \emph{point interaction}. By
$
\set{\uop}_{
	\alpha \in \oci{- \infty, \infty}
}
$,
we denote the family of $L^2$-self-adjoint extensions of the closure of the operator
$
- \Delta|_{
	C_0^\infty \parens{\real^2 \setminus \set{0}}
}
$
in $L^2$ with the goal of obtaining $L^2$-self-adjoint operators that mimic the behavior of
$- \Delta + \frac{1}{\alpha} \delta_0$
in $C_c^\infty$. We deviate from the usual notation for technical reasons, letting $\op$ denote a natural extension of $\uop$ to a certain linear subspace of $L^2_{\loc}$. For more details, see Section \ref{sect:Laplacian}.

Consider the following semilinear system:
\begin{equation}
\label{eqn:SN}
\begin{cases}
- \Delta u + \omega u
=
w u
+
\beta u \abs{u}^{p - 2}
&\text{in} ~ \real^N;
\\
- \Delta w = C_N \abs{u}^2
&\text{in} ~ \real^N,
\end{cases}
\end{equation}
where
\[
C_N
:=
\begin{cases}
2 \pi,
&\text{if} ~ N = 2;
\\
N \parens{N - 2} L_N,
&\text{if} ~ N \geq 3;
\end{cases}
\]
$L_N$ denotes the Lebesgue measure of the unit ball in $\real^N$ and we want to solve for
$u \colon \real^N \to \complex$,
$w \colon \real^N \to \coi{0, \infty}$
and $\omega \in \real$. The usual approach to study \eqref{eqn:SN} involves considering the fundamental solution of $-\Delta$, that is, the function
$
\Phi_N
\colon
\real^N \setminus \set{0} \to \real
$
defined as
\[
\Phi_N \parens{x}
=
\begin{cases}
-\frac{1}{2 \pi}
\log \abs{x},
&\text{if} ~ N = 2;
\\
\frac{1}{C_N \abs{x}^{N - 2}},
&\text{if} ~ N \geq 3.
\end{cases}
\]
The identity
$- \Delta \Phi_N = \delta_0$
holds in the sense of distributions, so the function
\[
w_u
:=
C_N
\Phi_N \ast \abs{u}^2
=
C_N
\int
	\Phi_N \parens{\cdot - y}
	\abs*{u \parens{y}}^2
\dif y
\]
formally satisfies
$- \Delta w_u = C_N \abs{u}^2$.
As such, we turn our attention to the following integrodifferential equation:
\begin{equation}
\label{eqn:reduced-SN}
- \Delta u + \omega u
=
w_u u + \beta u \abs{u}^{p - 2}
\quad \text{in} \quad \real^N.
\end{equation}
At least formally, \eqref{eqn:reduced-SN} is associated with the energy functional
\begin{equation}
\label{eqn:calE}
\E \parens{u}
:=
\int_{\real^N}
	\frac{1}{2}
	\abs*{\nabla u \parens{x}}^2
	-
	\frac{1}{4}
	w_u \parens{x}
	\abs*{u \parens{x}}^2
	-
	\frac{\beta}{p}
	\abs*{u \parens{x}}^p
\dif x.
\end{equation}

When $N = 3$ and $\beta = 0$, \eqref{eqn:reduced-SN} models the spatial profile of standing waves of quantum matter with energy $\omega$ under the sole effect of a nonlocal self-attractive interaction, appearing in numerous physical contexts (see \cite{pekarUntersuchungenUberElektronentheorie1954,liebExistenceUniquenessMinimizing1977, penroseGravitysRoleQuantum1996}).

On one hand, \eqref{eqn:reduced-SN} has already been widely studied when $N \geq 3$ and
$2 \leq p \leq 2 N / \parens{N - 2}$, in which case it follows from the Hardy--Littlewood--Sobolev inequality and the Sobolev embeddings that $\E$ is well defined in the usual Sobolev space $W^{1, 2} \parens{\real^N}$. We refer the reader to Moroz \& Van Schaftingen's
\cite{morozGroundstatesNonlinearChoquard2013, morozGuideChoquardEquation2017}
for an overview of the current knowledge about this problem when $N \geq 3$.

On the other hand, \eqref{eqn:reduced-SN} is more mathematically challenging and the corresponding literature is noticeably scarcer in the case
$N = 2$. First of all, $\Phi_2$ is not bounded from above or below, so the nonlinear operator
$u \mapsto - \Delta u + w_u u$
is not naturally associated with a positive definite quadratic form. Furthermore, \eqref{eqn:calE} is not well defined on the natural Sobolev space
$W^{1, 2} \parens{\real^2}$. Due to these peculiarities, there is an increasing interest and a rapidly expanding literature concerned with this problem. To cite a few papers, we highlight Stubbe's \cite{stubbeBoundStatesTwodimensional2008}, which introduced a convenient variational framework to treat this problem (see Section \ref{sect:vrtnl-frmwrk}); Masaki's \cite{masakiEnergySolutionSchrodinger2011}, which established the well-posedness of the associated Cauchy problem in the energy space;
\cite{cingolaniPlanarSchrodingerPoisson2016, duGroundStatesHigh2017} obtained several existence and multiplicity results; Cingolani \& Jeanjean's \cite{cingolaniStationaryWavesPrescribed2019} established existence and non-existence results for the normalized problem
\[
\begin{cases}
- \Delta u = w_u u + \beta u \abs{u}^{p - 2}
&\text{on} ~ \real^2;
\\
\norm{u}_{L^2}^2 = c,
\end{cases}
\]
and \cite{alvesExistenceNormalizedSolutions2023, chenMultipleNormalizedSolutions2024} obtained existence and multiplicity results for the normalized problem with exponential critical growth.

Even though the considered model for point interactions was introduced by Berezin \& Faddeev in the 1960s (see \cite{berezinRemarkSchrodingerEquation1961}) and a comprehensive book on the subject was already available by the end of the 1980s (see Albeverio, Gesztesy, Høegh-Krohn \& Holden's \cite{albeverioSolvableModelsQuantum1988}), the study of semilinear nonlocal problems involving a point interaction in dimensions 2 and 3 is much more recent. We proceed to a quick review of the relevant literature. Consider the \emph{Hartree equation} with a point interaction,
\[
\iu \psi' \parens{t}
=
\uop \psi \parens{t}
-
\parens*{w \ast \abs*{\psi \parens{t}}^2}
\psi \parens{t}
\quad \text{in} \quad
\real^N.
\]
In \cite{michelangeliSingularHartreeEquation2021}, Michelangeli, Olgiati \& Scandone obtained results about the well-posedness of the associated Cauchy problem in dimension $N = 3$ when $\alpha \geq 0$ under hypotheses on the integrability of $w$. In \cite{georgievStandingWavesGlobal2024}, Georgiev, Michelangeli \& Scandone obtained action ground states, their qualitative properties and established the well-posedness for the associated Cauchy problem in dimension $N  = 2$ when $w$ is non-negative, radial, non-increasing and sufficiently integrable. The analogous system to \eqref{eqn:delta-SN} obtained when considering self-repulsion instead of self-attraction is the \emph{nonlinear Schrödinger--Maxwell system} (often called the \emph{nonlinear Schrödinger--Poisson system}) with a point interaction:
\[
\begin{cases}
\uop u + \omega u + w u
=
\beta u \abs{u}^{p - 2}
&\text{in} ~ \Omega;
\\
- \Delta w = \omega_N \abs{u}^2
&\text{in} ~ \Omega.
\end{cases}
\]
(see \cite{daveniaNonradiallySymmetricSolutions2002}
for a physical deduction of this system when
$\alpha = \infty$). The Dirichlet problem where
$\beta = 0$ and $\Omega$ denotes a smooth bounded subset of $\real^3$ was considered by Coclite \& Holden in \cite{cocliteGroundStatesSchrodingerMaxwell2010, cocliteSchrodingerMaxwellSystem2007}, where the existence of nontrivial weak solutions and ground states were established. Furthermore, the normalized problem in $\Omega = \real^3$ was recently considered in the author's preprint \cite{depaularamosMinimizersMassconstrainedFunctionals2024}, where the existence of ground states with sufficiently small mass was proposed.

In this context, this paper is motivated by the fact that, to the best of our knowledge, there are no studies about the planar nonlinear Schrödinger--Newton system with a point interaction. More precisely, our main goal is to prove the existence of ground states of
\[
\begin{cases}
\op u = w_u u + \beta u \abs{u}^{p - 2}
&\text{on} ~ \real^2;
\\
\norm{u}_{L^2}^2 = c,
\end{cases}
\]
(see Theorem \ref{thm:ground-states}).

We finish with a comment on the organization of the rest of the introduction.
\begin{itemize}
\item
In Section \ref{intro:notation}, we fix the notation used throughout the paper.
\item
In Section \ref{sect:Laplacian}, we recall the precise definition of $\op$.
\item
In Section \ref{sect:vrtnl-frmwrk}, we introduce the relevant variational framework.
\item
In Section \ref{intro:main-results}, we state our main results.
\end{itemize}

\subsection{Notation}
\label{intro:notation}
\begin{itemize}
\item
The open ball on $\real^2$ centered at $x$ with radius $r > 0$ is denoted by $B_r \parens{x}$.
\item
Unless denoted otherwise, we consider complex functional spaces of functions defined a.e. on $\real^2$. Nonlinear functionals on these spaces are denoted by uppercase calligraphic Latin letters
$\mathcal{A}, \mathcal{B}, \mathcal{C} \ldots$
\item
Sesquilinear forms are linear in the second entry and we enclose their argument with square brackets. In the case of inner products, we most often employ the conventional notation
$\angles{\cdot, \cdot}$.
\item
The Sobolev space $W^{k, p}$ (resp., homogeneous Sobolev space $\dot{W}^{k, p}$) contains a.e.-defined functions whose weak derivatives of order
$0, \ldots, k$ (resp., of order $k$) are in $L^p$.
\item
If $X$ is a Hilbert space, then we denote its topological-linear dual by $X^*$ and the following notation is employed for the real part of the duality pairing:
\[
\angles{T, x}_{X^*, X}
:=
\Re \parens{T x}
\]
for every $T \in X^*$ and $x \in X$.
\item
Unless mention otherwise, we suppose that:
\begin{itemize}
\item
$\alpha, \beta$ denote real numbers;
\item
$c, \lambda$ denote positive numbers.
\end{itemize}
\end{itemize}

\subsection{The Laplacian of point interaction}
\label{sect:Laplacian}
\subsubsection{Definition}
We need to introduce a family of Green's functions to rigorously define the Laplacian of point interaction. Let
$
G_\lambda
\colon
\real^2 \setminus \set{0}
\to
\ooi{0, \infty}
$
be given by
\[
G_\lambda \parens{x}
=
\frac{1}{2 \pi}
K_0 \parens*{\sqrt{\lambda} \abs{x}},`
\]
so that
$
- \Delta G_\lambda + \lambda G_\lambda
=
\delta_0
$
in the sense of distributions, where $K_0$ denotes a modified Bessel function of the second kind (see \cite[Section 9.6]{abramowitzHandbookMathematicalFunctions1972}). The Green's function $G_\lambda$ has exponential decay at infinity and
\[
\frac{G_\lambda \parens{x}}
	{
		- \frac{\log \parens{\sqrt{\lambda} \abs{x}}}{2 \pi}
	}
\xrightarrow[x \to 0]{}
1,
\]
so
\begin{equation}
\label{eqn:integrability-of-G_lambda}
G_\lambda \in L^r
\quad \text{for every} \quad
r \in \coi{2, \infty}.
\end{equation}
Following \cite[Chapter I.5]{albeverioSolvableModelsQuantum1988}, the self-adjoint operator
$\uop \colon \Dom \parens{\uop} \to L^2$
is defined as
\[
\uop u = - \Delta \phi - q \lambda G_\lambda
\]
for every
$
u
=
\phi + q G_\lambda \in \Dom \parens{\uop}
$,
where
\[
\Dom \parens{\uop}
:=
\set*{
	\phi + q G_\lambda:
	\phi \in W^{2, 2}, ~
	q \in \complex, ~
	\lambda > 0
	\quad \text{and} \quad
	\parens{\alpha + \theta_\lambda} q
	=
	\phi \parens{0}
},
\]
\[
\theta_\lambda
:=
\frac{\gamma}{2 \pi}
+
\frac{1}{2 \pi}
\log \parens*{\frac{\sqrt{\lambda}}{2}}
\]
and
\[
\gamma
:=
\lim_{n \to \infty} \parens*{
	-
	\log n
	+
	\sum_{1 \leq k \leq n} \frac{1}{k}
}
\in \ooi{0.57, 0.58}
\]
is the \emph{Euler--Mascheroni constant}.

\subsubsection{Nonuniqueness of decomposition}
\label{section:nonuniqueness}

An important aspect of $\Dom \parens{\uop}$ is the nonuniqueness of decomposition of its elements. More precisely, consider the set
\[
D
:=
\set*{
	\phi + q G_\lambda :
	\phi \in W^{1, 2}_\loc, \quad
	q \in \complex
	\quad \text{and} \quad
	\lambda > 0
},
\]
which clearly contains $\Dom \parens{\uop}$. By \emph{nonuniqueness of decomposition}, we mean the result stated below.

\begin{prop}
\label{prop:nonuniqueness}
\begin{enumerate}
\item
Given $u \in D$, there exists a unique complex number $\Q \parens{u}$ (called the \emph{charge} of $u$) for which if $\phi \in W^{1, 2}_\loc$,
$q \in \complex$ and $\lambda > 0$ are such that
$u = \phi + q G_\lambda$, then
$q = \Q \parens{u}$.
\item
Given $u \in D$ and $\lambda > 0$, there exists a unique $\phi_\lambda \in W^{1, 2}_\loc$ such that
$u = \phi_\lambda + \Q \parens{u} G_\lambda$.
\end{enumerate}
\end{prop}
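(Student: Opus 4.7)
The plan is to exploit the fact that all members of the family $\set{G_\lambda}_{\lambda > 0}$ share the \emph{same} logarithmic singularity at the origin, namely $-\frac{1}{2\pi}\log\abs{x}$, because the leading term of $K_0 (z)$ at $z = 0^+$ is $- \log (z / 2)$ (independent of $\lambda$). I would first extract this fact as a lemma: using the small-argument expansion of $K_0$, write
\[
G_\lambda \parens{x}
=
- \frac{1}{2 \pi} \log \abs{x} + f_\lambda \parens{x},
\]
where $f_\lambda$ is bounded (and in fact continuous) in a neighborhood of $0$. Two consequences follow immediately. First, $G_\lambda \notin W^{1, 2}_\loc$, because its gradient agrees near $0$ with $-x/(2\pi\abs{x}^2)$ plus a regular term and $\int_{\abs{x} < 1} \abs{x}^{-2} \, \dif x = \infty$. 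Second, for any $\mu, \lambda > 0$, the difference $G_\mu - G_\lambda$ is bounded near $0$; combining $G_\mu - G_\lambda \in L^2_\loc$ with the distributional identity $- \Delta (G_\mu - G_\lambda) = - \mu G_\mu + \lambda G_\lambda \in L^2_\loc$ (using \eqref{eqn:integrability-of-G_lambda}) and standard elliptic regularity yields $G_\mu - G_\lambda \in W^{2, 2}_\loc \subset W^{1, 2}_\loc$.

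With these two facts in hand, both parts of the proposition become algebraic. For Part 1, suppose $u = \phi_1 + q_1 G_{\lambda_1} = \phi_2 + q_2 G_{\lambda_2}$ with $\phi_i \in W^{1, 2}_\loc$. Rearranging and grouping the Green's function terms, I would write
\[
\parens{q_1 - q_2} G_{\lambda_2}
=
\parens{\phi_2 - \phi_1}
-
q_1 \parens*{G_{\lambda_1} - G_{\lambda_2}}.
\]
The right-hand side lies in $W^{1, 2}_\loc$ by the regularity of the difference just established, while $G_{\lambda_2} \notin W^{1, 2}_\loc$. The only way to reconcile this is $q_1 = q_2$, which defines $\Q \parens{u}$ unambiguously.

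Part 2 then follows with little extra work. Given any decomposition $u = \phi + \Q \parens{u} G_\mu$ provided by the hypothesis that $u \in D$, I would define
\[
\phi_\lambda
:=
u - \Q \parens{u} G_\lambda
=
\phi + \Q \parens{u} \parens*{G_\mu - G_\lambda},
\]
which lies in $W^{1, 2}_\loc$ by the regularity of $G_\mu - G_\lambda$, proving existence; uniqueness of $\phi_\lambda$ for fixed $\lambda$ is immediate by subtraction.

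The main obstacle is really the preliminary regularity dichotomy $G_\lambda \notin W^{1, 2}_\loc$ vs.\ $G_\mu - G_\lambda \in W^{1, 2}_\loc$; once that is recorded, the rest is bookkeeping. I would be careful to cite the relevant pointwise asymptotic for $K_0$ from \cite{abramowitzHandbookMathematicalFunctions1972} and to invoke interior $W^{2, 2}$-regularity in a form (such as Calderón--Zygmund on compact sets) that does not require the full $L^2$ integrability of the functions involved, only local $L^2$ data.
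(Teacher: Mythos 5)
Your proof is correct, and it is essentially the argument the paper has in mind: the paper does not spell out a proof but defers to the three‑dimensional analogue (\cite[Lemma 2.1]{depaularamosMinimizersMassconstrainedFunctionals2024}), whose mechanism is exactly your dichotomy that a single Green's function $G_\lambda$ fails to lie in $W^{1,2}_\loc$ while differences $G_\mu - G_\lambda$ are locally regular because the singular parts cancel. Your write‑up fills in the two‑dimensional details (the $K_0$ asymptotics and the local elliptic regularity of $G_\mu - G_\lambda$) correctly, so no gap remains.
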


In dimension 3, this result is a corollary of \cite[Lemma 2.1]{depaularamosMinimizersMassconstrainedFunctionals2024}. The proof in dimension 2 follows from the same arguments.

\subsubsection{The associated energy space}
\label{sect:energy-space}
The spectrum of $\uop$ is given by
$
\sigma \parens{\uop}
=
\set{- \omega_\alpha}
\cup
\coi{0, \infty}
$,
where
\[
\omega_\alpha
:=
4 e^{- 4 \pi \alpha - 2 \gamma}
>
0
\]
(see \cite[Theorem 5.4]{albeverioSolvableModelsQuantum1988}). In particular, we obtain the following inner product on $\Dom \parens{\uop}$:
\begin{equation}
\label{eqn:pre-inner-product}
\parens{u, v}
\mapsto
\angles{\uop u, v}_{L^2}
+
\parens{1 + \omega_\alpha}
\angles{u, v}_{L^2}.
\end{equation}
We define $W^{1, 2}_\alpha$ as the Hilbert space obtained as completion of $\Dom \parens{\uop}$ with respect to \eqref{eqn:pre-inner-product} and we call $W^{1, 2}_\alpha$ the \emph{energy space} associated with $\uop$.

The underlying vector space of $W^{1, 2}_\alpha$ is often called the \emph{form domain} of $\uop$ and is explicitly given by
\[
\Dom \brackets{\uop}
:=
\set{
	\phi + q G_\lambda :
	\phi \in W^{1, 2}, \quad
	q \in \complex
	\quad \text{and} \quad
	\lambda > 0
}.
\]
The inner product of $W^{1, 2}_\alpha$ is given by
\[
\angles{u, v}_{W^{1, 2}_\alpha}
:=
H_\alpha \brackets{u, v}
+
\parens{1 + \omega_\alpha}
\angles{u, v}_{L^2},
\]
where
\[
H_\alpha \brackets{u, v}
:=
\angles{\phi_u, \phi_v}_{\dot{W}^{1, 2}}
+
\lambda \parens*{
	\angles{\phi_u, \phi_v}_{L^2}
	-
	\angles{u, v}_{L^2}
}
+
\parens{\alpha + \theta_\lambda}
\overline{q_u} q_v
\]
for every $u = \phi_u + q_u G_\lambda$ and
$v = \phi_v + q_v G_\lambda$ in
$\Dom \brackets{\uop}$. More precisely, the Hermitian form $H_\alpha$ denotes the unique continuous extension to $\Dom \brackets{\uop}$ of
\begin{equation}
\label{eqn:-Delta_alpha-u,u}
\Dom \parens{\uop} \times \Dom \parens{\uop}
\ni
\parens{u, v}
\mapsto
\angles{\uop u, v}_{L^2}.
\end{equation}

\begin{rmk}
\begin{enumerate}
\item
For a proof that $H_\alpha$ is well defined, see \cite[Lemma 2.4]{depaularamosMinimizersMassconstrainedFunctionals2024}).
\item
The identity
\[
\angles{
	- \Delta \phi + \lambda \phi,
	G_\lambda
}_{L^2}
=
\phi \parens{0}
\]
for every $\lambda > 0$ and $\phi \in W^{2, 2}$
(see Lemma \ref{lem:Dirac-identity}) is essential to prove that $H_\alpha$ extends the Hermitian form \eqref{eqn:-Delta_alpha-u,u}. For an analogous identity in $\real^3$, see \cite[Lemma 2.3]{depaularamosMinimizersMassconstrainedFunctionals2024}.
\end{enumerate}
\end{rmk}

For notational reasons, it will be useful to define the nonlinear functional
$\calH_\alpha \colon W^{1, 2}_\alpha \to \real$
as
\[
\calH_\alpha \parens{u}
=
H_\alpha \brackets{u, u}
=
\norm{\phi}_{\dot{W}^{1, 2}}^2
+
\lambda \parens*{
	\norm{\phi}_{L^2}^2
	-
	\norm{u}_{L^2}^2
}
+
\parens{\alpha + \theta_\lambda}
\abs{q}^2
\]
for every
$
u = \phi + q G_\lambda \in \Dom \brackets{\uop}
$.
To finish, we highlight that
$W^{1, 2}_\alpha \hookrightarrow L^r$
for every
$r \in \coi{2, \infty}$.
Indeed, it is well known that the Sobolev space
$W^{1, 2}$ is canonically embedded in $L^r$ for every $r \in \coi{2, \infty}$. Due to \eqref{eqn:integrability-of-G_lambda}, the analogous embeddings hold for $W^{1, 2}_\alpha$.

\subsubsection{An extension of the Laplacian of point interaction}
\label{sect:extension}
Consider the natural extension of
$\uop$ obtained as follows: let the linear operator
$\op \colon \Dom \parens{\op} \to L^2_{\loc}$
be defined as taking
$u = \phi + q G_\lambda \in \Dom \parens{\op}$
to
\[\op u = - \Delta \phi - q \lambda G_\lambda,\]
where
\[
\Dom \parens{\op}
:=
\set*{
	\phi + q G_\lambda:
	\phi \in W^{2, 2}_{\loc}, ~
	\lambda > 0
	\quad \text{and} \quad
	\parens{\alpha + \theta_\lambda} q
	=
	\phi \parens{0}
}.
\]
In Section \ref{intro:main-results}, we comment on the motivation for considering this extension.

\subsection{Variational framework}
\label{sect:vrtnl-frmwrk}

The strategy adopted in Stubbe's \cite{stubbeBoundStatesTwodimensional2008} to obtain a functional space where \eqref{eqn:calE} is well defined consisted in introducing the Hilbert space
$
X := \parens{
	\VS \parens{X}, \angles{\cdot, \cdot}_X
}
$,
where
\[
\VS \parens{X}
:=
\set*{
	\phi \in W^{1, 2}:
	\int
		\log \parens*{1 + \abs{x}}
		\abs*{\phi \parens{x}}^2
	\dif x
	<
	\infty
}
\]
and
\[
\angles{\phi, \psi}_X
:=
\angles{\phi, \psi}_{W^{1, 2}}
+
\int
	\log \parens*{1 + \abs{x}}
	\overline{\phi \parens{x}}
	\psi \parens{x}
\dif x.
\]
Let us briefly explain why $\E$ is well defined on $X$. Elementary computations show that
\[
\log t
=
\log \parens{1 + t}
-
\log \parens*{1 + \frac{1}{t}}
\]
for every $t > 0$. In particular,
\begin{multline*}
\int \int
	\log \parens*{\abs{x - y}}
	\abs*{\phi \parens{x}}^2
	\abs*{\phi \parens{y}}^2
\dif x \dif y
=
\\
\begin{aligned}
&=
\int \int
	\log \parens*{1 + \abs{x - y}}
	\abs*{\phi \parens{x}}^2
	\abs*{\phi \parens{y}}^2
\dif x \dif y
\\
&-
\int \int
	\log \parens*{1 + \frac{1}{\abs{x - y}}}
	\abs*{\phi \parens{x}}^2
	\abs*{\phi \parens{y}}^2
\dif x \dif y.
\end{aligned}
\end{multline*}
On one hand, the first integral on the RHS is well defined due to the definition of $X$. On the other hand, the second integral on the RHS is also well defined due to the elementary inequality
\[
\log \parens*{1 + \frac{1}{t}}
\leq
\frac{1}{t}
\]
for every $t > 0$ and the Hardy--Littlewood--Sobolev inequality (see Proposition \ref{prop:HLS}).

Inspired by this construction, we introduce the Hilbert space
$
X_\alpha
:=
\parens{
	\VS \parens{X_\alpha},
	\angles{\cdot, \cdot}_{X_\alpha}
}
$,
where
\[
\VS \parens{X_\alpha}
:=
\set*{
	u \in W^{1, 2}_\alpha:
	\int
		\log \parens*{1 + \abs{x}}
		\abs*{u \parens{x}}^2
	\dif x
	<
	\infty
}
\]
and
\[
\angles{u, v}_{X_\alpha}
:=
\angles{u, v}_{W^{1, 2}_\alpha}
+
\int
	\log \parens*{1 + \abs{x}}
	\overline{u \parens{x}}
	v \parens{x}
\dif x.
\]

\begin{rmk}
Equivalently,
\[
\VS \parens{X_\alpha}
=
\set{
	\phi + q G_\lambda:
	\phi \in X, \quad
	q \in \complex
	\quad \text{and} \quad
	\lambda > 0
}
\]
because the Green's function $G_\lambda$ is square-integrable and has exponential decay at infinity, so
$
\int
	\log \parens*{1 + \abs{x}}
	G_\lambda \parens{x}^2
\dif x
<
\infty
$.
\end{rmk}

As such, we naturally associate the integrodifferential equation
\[\op u = w_u u + \beta u \abs{u}^{p - 2}\]
with the \emph{energy functional}
$\E_\alpha \colon X_\alpha \to \real$
defined as
\[
\E_\alpha \parens{u}
=
\frac{1}{2}
\mathcal{H}_\alpha \parens{u}
+
\frac{1}{4}
\V_0 \parens{u}
-
\frac{\beta}{p}
\C \parens{u},
\]
where
\[
\V_0 \parens{u}
:=
\int \int
	\log \parens*{\abs{x - y}}
	\abs*{u \parens{x}}^2
	\abs*{u \parens{y}}^2
\dif x \dif y
\]
and
$\C \parens{u} := \norm{u}_{L^p}^p$.
Inspired by
\cite{cingolaniStationaryWavesPrescribed2019, cingolaniPlanarSchrodingerPoisson2016},
we also define the nonlinear functionals
$\V_1 \colon X_\alpha \to \coi{0, \infty}$,
$\V_2 \colon L^{\frac{8}{3}} \to \coi{0, \infty}$
as given by
\[
\V_1 \parens{u}
=
\int \int
	\log \parens*{1 + \abs{x - y}}
	\abs*{u \parens{x}}^2
	\abs*{u \parens{y}}^2
\dif x \dif y
\]
and
\[
\V_2 \parens{u}
=
\int \int
	\log \parens*{1 + \frac{1}{\abs{x - y}}}
	\abs*{u \parens{x}}^2
	\abs*{u \parens{y}}^2
\dif x \dif y,
\]
so that $\V_0 = \V_1 - \V_2$ in $X_\alpha$. To finish, we highlight a few results that follow from the arguments in the proof of
\cite[Lemma 2.2]{cingolaniPlanarSchrodingerPoisson2016}.

\begin{lem}
\label{lem:CW-2.2}
\begin{enumerate}
\item
The Hilbert space $X_\alpha$ is compactly embedded in $L^r$ for every
$r \in \coi{2, \infty}$.
\item
The energy functional $\E_\alpha$ is of class $C^1$.
\end{enumerate}
\end{lem}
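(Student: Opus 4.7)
The plan is to adapt the arguments of \cite[Lemma 2.2]{cingolaniPlanarSchrodingerPoisson2016} (which cover the analogous facts for $X$) by isolating the non-Sobolev component $q G_\lambda$. The key preliminary step is to fix $\lambda > 0$ and verify that the decomposition $u = \phi_\lambda + \Q \parens{u} G_\lambda$ from Proposition \ref{prop:nonuniqueness} induces a topological isomorphism $X_\alpha \to X \times \complex$. Continuity of $\Q$ on $X_\alpha$ should follow from the explicit expression for $\calH_\alpha$ together with the equivalence between $\norm{\cdot}_{X_\alpha}$ and $\norm{\phi_\lambda}_X + \abs{q}$, while the fact that $\phi_\lambda \in X$ whenever $u \in X_\alpha$ stems from $G_\lambda$ having exponential decay at infinity and only a logarithmic local singularity, so that $\int \log \parens*{1 + \abs{x}} G_\lambda \parens{x}^2 \dif x < \infty$.

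For part 1, once this splitting is established, compactness is immediate. Take a bounded sequence $\parens{u_n} \subset X_\alpha$ and write $u_n = \phi_n + q_n G_\lambda$. The isomorphism above yields that $\parens{\phi_n}$ is bounded in $X$ and $\parens{q_n}$ is bounded in $\complex$; applying the compact embedding $X \hookrightarrow\hookrightarrow L^r$ from \cite[Lemma 2.2]{cingolaniPlanarSchrodingerPoisson2016} together with the Bolzano--Weierstrass theorem, I extract a subsequence along which $\phi_n \to \phi$ in $L^r$ and $q_n \to q$ in $\complex$. Since $G_\lambda \in L^r$ by \eqref{eqn:integrability-of-G_lambda}, it follows that $u_n \to \phi + q G_\lambda$ in $L^r$.

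For part 2, I would treat each summand of $\E_\alpha$ separately. The functional $\calH_\alpha$ is the quadratic form of the continuous Hermitian form $H_\alpha$, hence smooth on $X_\alpha$. The term $\C \parens{u} = \norm{u}_{L^p}^p$ is $C^1$ on $L^p$ by the standard Nemytskii calculus (since $p > 2$), and the continuous embedding $X_\alpha \hookrightarrow L^p$ promotes this to $C^1$ regularity on $X_\alpha$. For $\V_0 = \V_1 - \V_2$: regarding $\V_1$, the subadditivity $\log \parens*{1 + \abs{x - y}} \leq \log \parens*{1 + \abs{x}} + \log \parens*{1 + \abs{y}}$ together with the definition of $X_\alpha$ yields the bound $\V_1 \parens{u} \leq 2 \norm{u}_{L^2}^2 \int \log \parens*{1 + \abs{x}} \abs*{u \parens{x}}^2 \dif x$; regarding $\V_2$, the elementary inequality $\log \parens*{1 + 1/t} \leq 1/t$ combined with the Hardy--Littlewood--Sobolev inequality (Proposition \ref{prop:HLS}) gives $\abs*{\V_2 \parens{u}} \lesssim \norm{u}_{L^{8/3}}^4$, which is controlled via the continuous embedding $X_\alpha \hookrightarrow L^{8/3}$.

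The main obstacle is promoting these pointwise estimates to genuine $C^1$ regularity of $\V_1$ and $\V_2$, since the logarithmic weight does not admit an obvious majorant compatible with dominated convergence. The standard workaround, following the template of \cite{cingolaniPlanarSchrodingerPoisson2016}, is to apply dominated convergence to the incremental quotients (using the concavity of $\log \parens*{1 + \cdot}$ and the HLS estimate for the reciprocal piece) to identify the Gateaux derivative, and then a second dominated-convergence argument to verify continuity of that derivative on $X_\alpha$. Once the decomposition $u = \phi + q G_\lambda$ is in place and we bound integrals involving $G_\lambda$ using \eqref{eqn:integrability-of-G_lambda}, the Cingolani--Weth arguments transfer to $X_\alpha$ without further modification.
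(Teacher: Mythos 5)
Your proposal is correct and matches the paper's intent: the paper proves Lemma \ref{lem:CW-2.2} simply by invoking the arguments of \cite[Lemma 2.2]{cingolaniPlanarSchrodingerPoisson2016}, and your transfer of those arguments via the fixed-$\lambda$ splitting $u = \phi_\lambda + \Q\parens{u} G_\lambda$ (using $G_\lambda \in L^r$ and its finite log-weighted $L^2$ norm, exactly as in the paper's remark on $\VS\parens{X_\alpha}$) is the natural way to carry this out. The only point to tighten is the claimed isomorphism $X_\alpha \cong X \times \complex$: rather than extracting continuity of $\Q$ from the formula for $\calH_\alpha$, it is cleaner to note that $(\phi, q) \mapsto \phi + q G_\lambda$ is a continuous linear bijection between Hilbert spaces and apply the bounded inverse theorem.
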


\subsection{Main results}
\label{intro:main-results}

Our first result establishes sufficient conditions for the existence of \emph{ground states} of \eqref{eqn:delta-SN}, i.e., solutions to the following minimization problem:
\begin{equation}
\label{eqn:minimization-problem}
\begin{cases}
	\E_\alpha \parens{u}
	=
	m_\alpha \parens{c}
	:=
	\inf_{v \in \sphere_\alpha \parens{c}} \E_\alpha \parens{v};
	\\
	u \in \sphere_\alpha \parens{c},
\end{cases}
\end{equation}
where
$
\sphere_\alpha \parens{c}
:=
\set{u \in X_\alpha: \norm{u}_{L^2}^2 = c}
$.

\begin{thm}
\label{thm:ground-states}
Suppose that $\alpha \in \real$, $c > 0$ and one of the following conditions is satisfied:
\begin{enumerate}
\item
$\beta \leq 0$ and $p > 2$; \quad \quad
\item
$\beta > 0$ and $2 < p < 4$; \quad \quad
\item
$\beta > 0$, $p = 4$ and
$
c
<
2 / \parens{
	\beta \widetilde{K}_{\GN} \parens{p}
}
$,
\end{enumerate}
where $\widetilde{K}_{\GN} \parens{p}$ denotes the positive constant for which the Gagliardo--Nirenberg inequality in Proposition \ref{ABCT22:Proposition-2.2} holds. The following implications are satisfied.
\begin{enumerate}
\item
If $\parens{u_n}_{n \in  \nat}$ is a minimizing sequence of
$\E_\alpha|_{\sphere_\alpha \parens{c}}$, then it admits a subsequence that converges in $X_\alpha$ to a solution to \eqref{eqn:minimization-problem}.
\item
If $u$ solves \eqref{eqn:minimization-problem}, then $\Q \parens{u} \neq 0$.
\end{enumerate}
\end{thm}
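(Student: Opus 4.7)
The plan is to prove Theorem \ref{thm:ground-states} by the direct method of the calculus of variations, leveraging the compact embedding $X_\alpha \hookrightarrow L^r$ for $r \in \coi{2, \infty}$ from Lemma \ref{lem:CW-2.2}.

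\textbf{Coercivity and existence.} First I would establish that $\E_\alpha|_{\sphere_\alpha(c)}$ is bounded from below and coercive on $X_\alpha$ in each hypothesis case, via three ingredients: the splitting $\V_0 = \V_1 - \V_2$ combined with the HLS inequality (Proposition \ref{prop:HLS}) applied to $\abs{u}^2 \in L^{8/3}$ (using $\log(1 + 1/t) \le 1/t$) to control $\V_2$, while $\V_1 \ge 0$ actually contributes to the $X_\alpha$-norm; the Gagliardo--Nirenberg estimate of Proposition \ref{ABCT22:Proposition-2.2} to dominate $\C$ when $\beta > 0$, yielding coercivity for $2 < p < 4$ by sub-critical scaling and, for $p = 4$, under the smallness condition $c < 2/\parens{\beta \widetilde{K}_{\GN}(p)}$; and the spectral bound $\calH_\alpha(u) \ge -\omega_\alpha c$ for the indefinite kinetic part. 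Given a minimizing sequence $(u_n)_{n \in \nat}$, coercivity yields $X_\alpha$-boundedness, so Lemma \ref{lem:CW-2.2} furnishes a subsequence with $u_n \rightharpoonup u$ weakly in $X_\alpha$ and strongly in every $L^r$, $r \in \coi{2, \infty}$; in particular $u \in \sphere_\alpha(c)$. Passing to the limit in the energy uses: continuity of $\C$ on $L^p$ and of $\V_2$ on $L^{8/3}$ (from strong convergence and HLS); weak lower semicontinuity of $\V_1$ via Fatou on the positive integrand $\log(1 + \abs{x - y})\abs{u_n(x)}^2\abs{u_n(y)}^2$; and weak lower semicontinuity of $\calH_\alpha$ on $X_\alpha$ as the Hermitian form associated with the self-adjoint semibounded operator $\op$. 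Thus $\E_\alpha(u) = m_\alpha(c)$, and the inequalities collapse to equalities, forcing $\calH_\alpha(u_n) \to \calH_\alpha(u)$ and $\V_1(u_n) \to \V_1(u)$; combined with $L^2$-norm convergence and weak $X_\alpha$-convergence, this upgrades convergence to the strong $X_\alpha$-topology by the usual Hilbert-space identity.

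\textbf{Non-vanishing of the charge.} Suppose for contradiction that a minimizer $u$ satisfies $\Q(u) = 0$, so $u \in W^{1, 2}$ and $\calH_\alpha(u) = \norm{\nabla u}_{L^2}^2$. I would exhibit a competitor in $\sphere_\alpha(c)$ with nonzero charge and strictly smaller energy, exploiting the negative eigenvalue $-\omega_\alpha$ of $\op$. Since $\alpha + \theta_{\omega_\alpha} = 0$ by construction of $\omega_\alpha$, the Green's function $G_{\omega_\alpha}$ belongs to $\Dom(\op)$ and satisfies $\op G_{\omega_\alpha} = -\omega_\alpha G_{\omega_\alpha}$; set $\chi := G_{\omega_\alpha}/\norm{G_{\omega_\alpha}}_{L^2}$, which has $\calH_\alpha(\chi) = -\omega_\alpha$ and nonzero charge. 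Consider the normalized curve
\[
v_t := \sqrt{c}\, \frac{u + t\chi}{\norm{u + t\chi}_{L^2}} \in \sphere_\alpha(c), \qquad t \in \real,
\]
whose charge is nonzero for every $t \ne 0$. Since $u$ is a constrained critical point, the first derivative of $\E_\alpha(v_t)$ at $t = 0$ vanishes, and minimality requires the second derivative to be $\ge 0$. A Taylor expansion reveals that the favorable term in this second derivative is $\calH_\alpha(\chi) = -\omega_\alpha < 0$, while all other contributions---from the mass renormalization factor (bounded because $\abs{\angles{u, \chi}_{L^2}}$ is bounded), from the logarithmic nonlocal interaction $\V_0$, and from the nonlinear term $\C$---are bounded quantities. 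Replacing $\chi$ by its $L^2$-projection onto $u^\perp$ (which preserves the non-vanishing of the charge, since $u$ has charge zero) cleans up the cross derivatives and isolates the favorable sign. Making this quantitative produces $\E_\alpha(v_t) < \E_\alpha(u)$ for some small nonzero $t$, contradicting minimality.

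\textbf{Main obstacle.} The coercivity, compactness and strong-convergence parts are routine adaptations of the Stubbe--Cingolani--Weth framework, with $\calH_\alpha$ playing the role of $\norm{\nabla \cdot}_{L^2}^2$. The delicate step is the second-derivative computation in the non-vanishing-charge argument: one has to verify that the bounded but potentially adverse curvatures of $\V_0$ and $\C$ at the minimizer are beaten by the $-\omega_\alpha$ contribution from $\calH_\alpha$, uniformly in the parameters $\alpha$, $\beta$, $c$ and $p$ within the hypothesized range, without recourse to quantitative information about the specific minimizer $u$ other than $u \in W^{1, 2} \cap \sphere_\alpha(c)$.
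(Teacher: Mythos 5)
There are two genuine gaps. First, in the existence part you assert that ``coercivity yields $X_\alpha$-boundedness'' of the minimizing sequence, but the lower bounds coming from HLS, the Gagliardo--Nirenberg inequality of Proposition \ref{ABCT22:Proposition-2.2} and the spectral bound only control $\norm{\phi_n}_{\dot{W}^{1,2}}$, $\abs{\Q \parens{u_n}}$ and (via the constraint) the $L^2$ norm, i.e.\ boundedness in $W^{1,2}_\alpha$ and of $\V_1 \parens{u_n}$; they do \emph{not} control the weight $\int \log \parens{1 + \abs{x}} \abs{u_n \parens{x}}^2 \dif x$ that is part of the $X_\alpha$ norm. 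Passing from a bound on $\V_1 \parens{u_n}$ to a bound on this weighted term (via $\log\parens{1+\abs{x}} \leq \log\parens{1+\abs{x-y}} + \log\parens{1+\abs{y}}$) requires knowing that a fixed fraction of the mass of $u_n$ stays in a \emph{fixed} ball, i.e.\ that the sequence neither vanishes nor drifts to infinity. This is exactly where the paper's argument has its nontrivial content: Lemma \ref{lem:m_alpha(c)<m(c)} shows the strict inequality $m_\alpha \parens{c} < m \parens{c}$ (by testing the Euler--Lagrange equation of a translated minimizer of the free problem against $G_\lambda$ and invoking the Dirac identity of Lemma \ref{lem:Dirac-identity}), which forces $\liminf_n \abs{\Q \parens{u_n}} > 0$; Lemmas \ref{lem:CJ-Lemma-2.5}--\ref{lem:CJ-Lemma-3.1} then use the non-vanishing charge to exclude translation to infinity (a translated $G_{\abs{q_n}^2/c}$ would converge weakly to $0$ in $L^2$, contradicting $q_\infty \neq 0$) and thereby obtain relative $L^2$-compactness without translations and, only then, boundedness and weak convergence in $X_\alpha$. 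Without this step your appeal to the compact embedding $X_\alpha \hookrightarrow L^r$ is circular, since that embedding applies only to sequences already bounded in $X_\alpha$; a minimizing sequence could a priori concentrate around points $x_n \to \infty$, making the weighted norm blow up.

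Second, your proof of $\Q \parens{u} \neq 0$ is not carried out and, as sketched, does not close. For a constrained minimizer the second variation along curves in $\sphere_\alpha \parens{c}$ is automatically nonnegative, so to reach a contradiction you must actually prove that the second derivative of $t \mapsto \E_\alpha \parens{v_t}$ is negative; but that second derivative contains, besides $\calH_\alpha \parens{\chi} = - \omega_\alpha$, the Lagrange multiplier term $\omega \norm{\chi}_{L^2}^2$ and the curvatures of $\V_0$ and $\C$ at $u$, none of which you can bound uniformly in terms of the data (you acknowledge this yourself in the ``main obstacle'' paragraph). There is no reason these terms are beaten by $- \omega_\alpha$, and no estimate is offered. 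The paper avoids this entirely: once $m_\alpha \parens{c} < m \parens{c}$ is known, $\Q \parens{u} = 0$ would give $u \in X \cap \sphere \parens{c}$ and hence $m \parens{c} \leq \E \parens{u} = \E_\alpha \parens{u} = m_\alpha \parens{c} < m \parens{c}$, an immediate contradiction. So the missing ingredient in both parts of your proposal is precisely the comparison with the translation-invariant problem \eqref{eqn:minimization-problem-in-W1,2}, which is the paper's key lemma.
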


The general argument used to prove the theorem follows the ideas in Cingolani \& Jeanjean's \cite{cingolaniStationaryWavesPrescribed2019}. On the other hand, we are interested in a problem that is not invariant by translation because $\op$ models the action of the formal operator
$- \Delta + \alpha^{- 1} \delta_0$. As such, we do not obtain a convergent sequence of the form
$\parens{u_n \parens{\cdot - x_n}}_{n \in \nat}$
as in \cite[Theorem 1.1]{cingolaniStationaryWavesPrescribed2019}, which is coherent with the fact that the Hilbert space $X_\alpha$ does not contain translates of functions in $X_\alpha \setminus X$.

Let us sketch the main difference with the arguments used to prove \cite[Theorem 1.1]{cingolaniStationaryWavesPrescribed2019}.
Suppose that $\parens{u_n}_{n \in \nat}$ is a minimizing sequence of
$\E_\alpha|_{\sphere_\alpha \parens{c}}$.
As in Adami, Boni, Carlone \& Tentarelli's \cite{adamiGroundStatesPlanar2022, adamiExistenceStructureRobustness2022}, the first key point is that
\eqref{eqn:minimization-problem} is energetically convenient with respect to the corresponding problem in absence of a point interaction, that is,
\begin{equation}
\label{eqn:minimization-problem-in-W1,2}
\begin{cases}
	\E \parens{\phi}
	=
	m \parens{c}
	:=
	\inf_{\psi \in \sphere \parens{c}}
	\E \parens{\psi};
	\\
	\phi \in \sphere \parens{c},
\end{cases}
\end{equation}
where
$
\sphere \parens{c}
:=
\set{\phi \in X: \norm{\phi}_{L^2}^2 = c}
$ (see Lemma \ref{lem:m_alpha(c)<m(c)}). In particular, it follows that
\begin{equation}
\label{intro:eqn:liminf-Q(u_n)>0}
\liminf_{n \to \infty}
	\abs*{\Q \parens{u_n}}
>
0.
\end{equation}

With an argument reminiscent of the proof of Lions' concentration-compactness lemma \cite[Lemma I.1]{lionsConcentrationcompactnessPrincipleCalculus1984},
we obtain a set
$\set{x_n}_{n \in \nat} \subset \real^2$
such that
$\parens{u_n \parens{\cdot - x_n}}_{n \in \nat}$
is relatively compact in $L^2$. On the other hand, the sequence
$\parens{u_n \parens{\cdot - x_n}}_{n \in \nat}$
only stays in $X_\alpha$ when either
$\set{u_n}_{n \in \nat} \subset X$
or
$
\set{x_n}_{n \in \nat} = \set{0} \subset \real^2
$.
Due to \eqref{intro:eqn:liminf-Q(u_n)>0}, we cannot expect the inclusion
$\set{u_n}_{n \in \nat} \subset X$
to hold. Bounded sequences in $\real^2$ are relatively compact, so either
$\set{x_n}_{n \in \nat}$ has a divergent subsequence or
$\parens{u_n}_{n \in \nat}$ is relatively compact in $L^2$. In the proof of Lemma \ref{lem:relatively-compact-in-L^2}, we show that the divergence $\abs{x_n} \to \infty$ as
$n \to \infty$ contradicts \eqref{intro:eqn:liminf-Q(u_n)>0}, hence the relative compactness of
$\parens{u_n}_{n \in \nat}$ in $L^2$. The rest of the proof follows Cingolani \& Jeanjean's method.

Before proceeding to the next main result, let us comment on the motivation for considering the extension of $\uop$ introduced in Section \ref{sect:extension}. Earlier investigations of nonlinear elliptic equations involving a point interaction used the fact that if
$u = \phi + q G_\lambda$ is a critical point of an energy functional
$W^{1, 2}_\alpha \to \real$, then
$u \in \Dom \parens{\uop}$ (see \cite{adamiExistenceStructureRobustness2022, adamiGroundStatesPlanar2022, depaularamosMinimizersMassconstrainedFunctionals2024, georgievStandingWavesGlobal2024}, for instance). In the context of this paper, it is not clear whether critical points of
$\E_\alpha|_{\sphere_\alpha \parens{c}}$
are in $\Dom \parens{\uop}$. Indeed, it suffices to argue as in the proof of \cite[Proposition 2.3]{cingolaniPlanarSchrodingerPoisson2016}
to deduce that if $\phi \in X$, then
\[
w_\phi \parens{x} + \norm{\phi}_{L^2}^2 \log x
\xrightarrow[\abs{x} \to \infty]{}
0.
\]
As such, we consider an extension of $\uop$ to a subspace of $L^2_{\loc}$ because the next result will show that we can at least prove that critical points of
$\E_\alpha|_{\sphere_\alpha \parens{c}}$
are automatically in $\Dom \parens{\op}$.

Finally, our last result is that critical points of
$\E_\alpha|_{\sphere_\alpha \parens{c}}$
are naturally associated with standing waves of the evolution equation
\begin{equation}
\label{eqn:evolution-delta-SN}
\iu \psi' \parens{t}
=
\op \psi \parens{t}
-
w_{\psi \parens{t}} \psi \parens{t}
-
\beta
\psi \parens{t}
\abs{\psi \parens{t}}^{p - 2}.
\end{equation}

\begin{prop}
\label{prop:regularity}
Suppose that $\alpha \in \real$, $p > 2$,
$c > 0$, $u$ is a critical point of
$\E_\alpha|_{\sphere_\alpha \parens{c}}$
and $\omega \in \real$ denotes its associated Lagrange multiplier, i.e.,
\begin{equation}
\label{lem:regularity:1}
\angles*{
	\E_\alpha' \parens{u},
	v
}_{X_\alpha^*, X_\alpha}
+
\omega
\Re \angles{u, v}_{L^2}
=
0
\quad \text{for every} \quad
v \in X_\alpha.
\end{equation}
Then
$u \in \Dom \parens{\op}$
and the function
\[
\real \ni t
\mapsto
e^{\iu \omega t} u \in \Dom \parens{\op}
\]
is a \emph{standing wave} of \eqref{eqn:evolution-delta-SN} in the sense that
\begin{equation}
\label{lem:regularity:2}
\op u + \omega u
=
w_u u
+
\beta u \abs{u}^{p - 2}
\quad \text{a.e. on} \quad
\real^2.
\end{equation}
\end{prop}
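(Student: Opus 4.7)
Fix a decomposition $u = \phi + q G_\lambda$ with $\phi \in W^{1, 2}$, $q = \Q(u)$ and $\lambda > 0$. The plan is to convert \eqref{lem:regularity:1} into a distributional elliptic equation for $\phi$, apply interior elliptic regularity to obtain $\phi \in W^{2, 2}_{\loc}$, and then recover the boundary condition $\phi(0) = (\alpha + \theta_\lambda) q$ via a cut-off version of the Dirac identity from Lemma \ref{lem:Dirac-identity}.

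Computing $\E_\alpha'(u)$ explicitly and using both $v$ and $\iu v$ as variations in \eqref{lem:regularity:1} produces the complex-linear Euler--Lagrange identity
\[
H_\alpha \brackets{u, v} = \int \overline{u} \parens{w_u + \beta \abs{u}^{p - 2} - \omega} v \dif x \quad \text{for every} \quad v \in X_\alpha.
\]
Testing against $v \in C_c^\infty \parens{\real^2}$ (for which $\phi_v = v$, $q_v = 0$), using $u - \phi = q G_\lambda$ and taking complex conjugates gives
\[
- \Delta \phi + \lambda \phi = w_u u + \beta u \abs{u}^{p - 2} + (\lambda - \omega) u =: h \quad \text{in } \real^2.
\]
Because $u \in L^r$ for every $r \in \coi{2, \infty}$ by Lemma \ref{lem:CW-2.2} and $w_u \in L^\infty_{\loc}$ (using $\log \abs{\cdot} \in L^1_{\loc}$, $\abs{u}^2 \in L^1$, and $\int \log \parens{1 + \abs{y}} \abs{u (y)}^2 \dif y < \infty$ to control infinity), one has $h \in L^2_{\loc}$. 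Interior elliptic regularity then yields $\phi \in W^{2, 2}_{\loc}$, and the planar Sobolev embedding gives $\phi \in C^0_{\loc}$, so $\phi(0)$ is well defined.

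Since $G_\lambda$ decays exponentially at infinity, $G_\lambda \in \VS(X_\alpha)$. Testing the Euler--Lagrange identity against $v = G_\lambda$ (for which $\phi_{G_\lambda} = 0$, $q_{G_\lambda} = 1$) yields
\[
(\alpha + \theta_\lambda) q = \int h \, G_\lambda \dif x,
\]
where integrability follows from the exponential decay of $G_\lambda$ absorbing the logarithmic growth of $w_u$. The main obstacle is identifying this integral with $\phi(0)$, since Lemma \ref{lem:Dirac-identity} requires $\phi \in W^{2, 2}$ globally, whereas we only have $\phi \in W^{2, 2}_{\loc}$. I would pick $\eta \in C_c^\infty \parens{\real^2}$ with $\eta \equiv 1$ on $B_1 \parens{0}$ and $\eta \equiv 0$ outside $B_2 \parens{0}$, set $\eta_R \parens{x} := \eta \parens{x / R}$, and apply Lemma \ref{lem:Dirac-identity} to $\eta_R \phi \in W^{2, 2}$; the Leibniz rule then produces
\[
\phi(0) = \int \eta_R h \, G_\lambda \dif x - \int \parens{2 \nabla \eta_R \cdot \nabla \phi + \phi \Delta \eta_R} G_\lambda \dif x.
\]
Sending $R \to \infty$, dominated convergence handles the first term while the commutator integrals vanish by Cauchy--Schwarz, as $\norm{\nabla \eta_R}_{L^\infty} = O \parens{1 / R}$, $\norm{\Delta \eta_R}_{L^\infty} = O \parens{1 / R^2}$ and $\phi, \nabla \phi, G_\lambda \in L^2$. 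Combining this with the preceding display gives $\phi(0) = (\alpha + \theta_\lambda) q$; thus $u \in \Dom(\op)$ and
\[
\op u = - \Delta \phi - q \lambda G_\lambda = h - \lambda u = w_u u + \beta u \abs{u}^{p - 2} - \omega u,
\]
which is \eqref{lem:regularity:2}. The standing-wave property then follows from $\abs{e^{\iu \omega t} u} \equiv \abs{u}$ (so $w_{e^{\iu \omega t} u} = w_u$) by direct substitution into \eqref{eqn:evolution-delta-SN}.
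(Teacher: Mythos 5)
Your proposal is correct and takes essentially the same route as the paper: derive the complex Euler--Lagrange identity, test against $C_c^\infty$ functions to obtain $-\Delta \phi + \lambda \phi = h \in L^2_{\loc}$, apply interior elliptic regularity, and then test with $G_\lambda$ (and $\iu G_\lambda$) to identify $(\alpha + \theta_\lambda) q$ with $\phi(0)$ via Lemma \ref{lem:Dirac-identity}. Your only deviation, the cut-off argument, is not strictly needed because the paper states Lemma \ref{lem:Dirac-identity} directly for $\phi \in W^{2,2}_{\loc}$, but it is harmless and in fact makes explicit the integrability at infinity that a direct application of that lemma tacitly uses.
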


\subsection*{Acknowledgment}

\sloppy
This study was financed, in part, by the São Paulo Research Foundation (FAPESP), Brasil. Process Number \#2024/20593-0.

\section{Preliminaries}

\subsection{Properties of the Green's function $G_\lambda$}

By using the properties of the Fourier transform, an explicit computation shows that
\begin{equation}
\label{eqn:mass-of-G_lambda}
\norm{G_\lambda}_{L^2}^2
=
\frac{1}{4 \pi \lambda}.
\end{equation}
The result that follows is an adaptation of
\cite[Lemma 2.3]{depaularamosMinimizersMassconstrainedFunctionals2024}
to dimension 2.

\begin{lem}
\label{lem:Dirac-identity}
The following identity is satisfied:
\[
\int
	\parens*{- \Delta \phi \parens{x} + \lambda \phi \parens{x}}
	G_\lambda \parens{x}
\dif x
=
\phi \parens{0}
\]
for every $\lambda > 0$ and
$\phi \in W^{2, 2}_{\loc}$.
\end{lem}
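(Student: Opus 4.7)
The plan rests on the fact that $G_\lambda$ is, by construction, the fundamental solution of $-\Delta+\lambda$ on $\real^2$. I would establish the identity in three stages: first for test functions (directly from the distributional identity), then for $W^{2,2}(\real^2)$ via density, and finally for $W^{2,2}_{\loc}$ via cutoff. The last stage should be read under the tacit assumption that the left-hand side converges absolutely, which is necessary for the statement to be meaningful.

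For $\phi \in C_c^\infty(\real^2)$, the function $G_\lambda$ is locally integrable (by \eqref{eqn:integrability-of-G_lambda}) and $(-\Delta+\lambda)\phi$ has compact support, so the distributional pairing $\langle G_\lambda,(-\Delta+\lambda)\phi\rangle$ coincides with the absolutely convergent integral on the left-hand side; by the defining identity $(-\Delta+\lambda)G_\lambda=\delta_0$, this equals $\phi(0)$.

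For $\phi \in W^{2,2}(\real^2)$, I would extend by density. The two-dimensional Sobolev embedding $W^{2,2}(\real^2)\hookrightarrow C_b^0(\real^2)$ (valid because $2\cdot 2>2$) makes point evaluation at the origin continuous, while $G_\lambda\in L^2$ (by \eqref{eqn:mass-of-G_lambda}) together with Cauchy--Schwarz makes $\phi\mapsto\int G_\lambda(-\Delta\phi+\lambda\phi)\,\dif x$ continuous on $W^{2,2}$. Both functionals coincide on the dense subspace $C_c^\infty$, hence on $W^{2,2}$.

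For general $\phi \in W^{2,2}_{\loc}$, I would localize via a smooth cutoff $\eta_R\in C_c^\infty(\real^2)$ with $\eta_R\equiv 1$ on $B_R(0)$, $\operatorname{supp}(\eta_R)\subset B_{2R}(0)$ and $\|\partial^k\eta_R\|_\infty = O(R^{-k})$ for $k=1,2$. Applying the previous step to $\eta_R\phi\in W^{2,2}$ and expanding $-\Delta(\eta_R\phi)=\eta_R(-\Delta\phi)-2\nabla\eta_R\cdot\nabla\phi-\phi\,\Delta\eta_R$ gives
\[
\phi(0)
=
\int \eta_R\,G_\lambda(-\Delta\phi+\lambda\phi)\,\dif x
-
2\int G_\lambda\,\nabla\eta_R\cdot\nabla\phi\,\dif x
-
\int G_\lambda\,\phi\,\Delta\eta_R\,\dif x.
\]
The main obstacle is to show that the last two integrals vanish as $R\to\infty$. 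The essential ingredient is the exponential decay of $G_\lambda$ at infinity (noted just after its definition), which easily dominates the polynomially small derivatives of $\eta_R$ against the local $L^2$-bounds for $\phi$ and $\nabla\phi$; dominated convergence on the leading term then yields $\phi(0)=\int G_\lambda(-\Delta\phi+\lambda\phi)\,\dif x$, completing the proof.
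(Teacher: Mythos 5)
Your first two stages are correct, and they take a genuinely different route from the paper, which never argues by density: the paper excises a small ball, applies the divergence theorem on $\real^2 \setminus B_\eps(0)$ together with $-\Delta G_\lambda + \lambda G_\lambda = 0$ there, and computes the limit of the two boundary terms as $\eps \to 0^+$ from the asymptotics of $K_0$ and $K_0'$, using a trace inequality for the $\nabla\phi$-term. The genuine gap is in your third stage. For $\phi \in W^{2,2}_{\loc}$ there is no bound whatsoever on $\|\phi\|_{L^2(B_{2R}\setminus B_R)}$ and $\|\nabla\phi\|_{L^2(B_{2R}\setminus B_R)}$ as $R \to \infty$: these local norms may grow faster than any exponential, so the decay $G_\lambda(x) \approx C\,|x|^{-1/2}e^{-\sqrt{\lambda}|x|}$ does not make the two error integrals small, and the phrase ``easily dominates'' is precisely the step that fails. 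Your tacit assumption that the left-hand side converges absolutely does not rescue it either, since it only controls the combination $-\Delta\phi + \lambda\phi$ weighted by $G_\lambda$, and gives no information on $\phi$ or $\nabla\phi$ separately on the annuli $B_{2R}\setminus B_R$.

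Moreover, no argument can close this gap at the stated level of generality, because the identity is false on all of $W^{2,2}_{\loc}$: take $\phi(x) = I_0(\sqrt{\lambda}\,|x|)$, with $I_0$ the modified Bessel function of the first kind. This $\phi$ is smooth (an entire function of $\lambda|x|^2$), satisfies $-\Delta\phi + \lambda\phi = 0$, so the left-hand side equals $0$ and is absolutely convergent, yet $\phi(0) = I_0(0) = 1$. Some hypothesis on $\phi$ at infinity is therefore indispensable. It is available where the paper actually invokes the lemma (the proofs of Lemma \ref{lem:m_alpha(c)<m(c)} and of Proposition \ref{prop:regularity}), since there $\phi \in X \cap W^{2,2}_{\loc} \subset W^{1,2}$; under such an assumption your cutoff step does close, e.g.\ by choosing radii $R_j \to \infty$ along which $\int_{B_{2R_j}\setminus B_{R_j}} (|\phi|^2 + |\nabla\phi|^2)\,\dif x \to 0$ and applying Cauchy--Schwarz together with the boundedness of $G_\lambda$, $\nabla\eta_{R_j}$, $\Delta\eta_{R_j}$ on those annuli. (To be fair, the paper's own proof shares the tacit requirement, as its divergence-theorem step on the unbounded domain silently discards a flux at infinity; but the justification you wrote, exponential decay of $G_\lambda$ alone, is the concrete step that would fail.)
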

\begin{proof}
It follows from the Chain Rule that
\begin{equation}
\label{lem:Dirac-identity:1}
- G_\lambda \Delta \phi
=
- \phi \Delta G_\lambda
+
\mathrm{div} \parens{\phi \nabla G_\lambda - G_\lambda \nabla \phi}.
\end{equation}
Consider an $\eps > 0$. Due to \eqref{lem:Dirac-identity:1} and the Divergence Theorem \cite[Theorem 6.3.5]{willemFunctionalAnalysisFundamentals2022},
\begin{multline*}
-
\int_{\real^2 \setminus B_\eps \parens{0}}
	G_\lambda \parens{x} \Delta \phi \parens{x}
\dif x
=
-
\int_{\real^2 \setminus B_\eps \parens{0}}
	\phi \parens{x} \Delta G_\lambda \parens{x}
\dif x
+
\\
-
\frac{\sqrt{\lambda}}{2 \pi}
K_0' \parens*{\sqrt{\lambda} \eps}
\int_{\eps \mathbb{S}^1}
	\phi \parens{x}
\dif \sigma_\eps \parens{x}
+
\frac{K_0 \parens*{\sqrt{\lambda} \eps}}
	{2 \pi}
\int_{\eps \mathbb{S}^1}
	\frac{\nabla \phi \parens{x} \cdot x}{\abs{x}}
\dif \sigma_\eps \parens{x}
\end{multline*}
where $\sigma_\eps$ denotes the surface measure of $\eps \mathbb{S}^1$. A change of variable shows that
\begin{multline*}
-
\int_{\real^2 \setminus B_\eps \parens{0}}
	G_\lambda \parens{x} \Delta \phi \parens{x}
\dif x
=
-
\int_{\real^2 \setminus B_\eps \parens{0}}
	\phi \parens{x} \Delta G_\lambda \parens{x}
\dif x
+
\\
-
\frac{\eps \sqrt{\lambda}}{2 \pi}
K_0' \parens*{\sqrt{\lambda} \eps}
\int_{\mathbb{S}^1}
	\phi \parens{\eps x}
\dif \sigma_1 \parens{x}
+
\frac{\eps}{2 \pi}
K_0 \parens*{\sqrt{\lambda} \eps}
\int_{\mathbb{S}^1}
	\nabla \phi \parens{\eps x} \cdot x
\dif \sigma_1 \parens{x}.
\end{multline*}
By summing
$
\lambda
\int_{\real^2 \setminus B_\eps \parens{0}}
	G_\lambda \parens{x} \phi \parens{x}
\dif x
$
to both sides, we obtain
\begin{multline}
\label{lem:Dirac-identity:1.5}
\int_{\real^2 \setminus B_\eps \parens{0}}
	\parens*{
		- \Delta \phi \parens{x}
		+
		\lambda \phi \parens{x}
	}
	G_\lambda \parens{x}
\dif x
=
\\
-
\frac{\eps \sqrt{\lambda}}{2 \pi}
K_0' \parens*{\sqrt{\lambda} \eps}
\int_{\mathbb{S}^1}
	\phi \parens{\eps x}
\dif \sigma_1 \parens{x}
+
\frac{\eps}{2 \pi}
K_0 \parens*{\sqrt{\lambda} \eps}
\int_{\mathbb{S}^1}
	\nabla \phi \parens{\eps x} \cdot x
\dif \sigma_1 \parens{x}
\end{multline}
because
$- \Delta G_\lambda + \lambda G_\lambda = 0$
on
$\real^2 \setminus \set{0}$.
As $\phi \in W^{2, 2}_\loc$, it follows from the Trace Inequality \cite[Theorem 6.3.3]{willemFunctionalAnalysisFundamentals2022} that
\[
\set*{
	\int_{\mathbb{S}^1}
		\nabla \phi \parens{\eps x} \cdot x
	\dif \sigma_1 \parens{x}
}_{0 < \eps \leq 1}
\]
is bounded, so
\begin{equation}
\label{lem:Dirac-identity:2}
\frac{\eps}{2 \pi}
K_0 \parens*{\sqrt{\lambda} \eps}
\int_{\mathbb{S}^1}
	\nabla \phi \parens{\eps x} \cdot x
\dif \sigma_1 \parens{x}
\xrightarrow[\eps \to 0^+]{}
0.
\end{equation}
On the other hand,
\begin{equation}
\label{lem:Dirac-identity:3}
-
\frac{\eps \sqrt{\lambda}}{2 \pi}
K_0' \parens*{\sqrt{\lambda} \eps}
\int_{\mathbb{S}^1}
	\phi \parens{\eps x}
\dif \sigma_1 \parens{x}
\xrightarrow[\eps \to 0^+]{}
\phi \parens{0}.
\end{equation}
Finally, the result follows from \eqref{lem:Dirac-identity:1.5}--\eqref{lem:Dirac-identity:3}.
\end{proof}

\subsection{A convenient decomposition of functions in $W^{1, 2}_\alpha \setminus W^{1, 2}$}
\label{sect:convenient}

We will often employ the following \emph{convenient decomposition} of functions in
$W^{1, 2}_\alpha \setminus W^{1, 2}$
inspired by the arguments in \cite{adamiExistenceStructureRobustness2022, adamiGroundStatesPlanar2022}: suppose that
$u \in W^{1, 2}_\alpha \setminus W^{1, 2}$.
The quotient
$\abs{q}^2 / \norm{u}_{L^2}^2$
is well defined because $u \not \equiv 0$,
where $q := \Q \parens{u}$. More 
precisely,
$\abs{q}^2 / \norm{u}_{L^2}^2 > 0$
because
$u \in W^{1, 2}_\alpha \setminus W^{1, 2}$
implies $q \neq 0$. As such, we can let
\[
\phi
=
u - q G_{\frac{\abs{q}^2}{\norm{u}_{L^2}^2}}
\in
W^{1, 2}.
\]
The ensuing decomposition
$
u
=
\phi + q G_{\abs{q}^2 / \norm{u}_{L^2}^2}
$
has the following useful property: due to \eqref{eqn:mass-of-G_lambda} and the triangle inequality,
\[
\parens*{1 - \frac{1}{\sqrt{4 \pi}}}
\norm{u}_{L^2}
\leq
\norm{\phi}_{L^2}
\leq
\parens*{1 + \frac{1}{\sqrt{4 \pi}}}
\norm{u}_{L^2}.
\]

\section{Existence of ground states}
\label{sect:ground-states}

In this section, we suppose that the hypotheses in Theorem \ref{thm:ground-states} are satisfied. Our first goal is to show that \eqref{eqn:minimization-problem} is energetically convenient with respect to \eqref{eqn:minimization-problem-in-W1,2} in the sense that $m_\alpha \parens{c} < m \parens{c}$.

\begin{lem}
\label{lem:m_alpha(c)<m(c)}
The inequality
$m_\alpha \parens{c} < m \parens{c}$
is satisfied. It follows that if
$\parens{u_n}_{n \in \nat}$
is a minimizing sequence of
$\E_\alpha|_{\sphere_\alpha \parens{c}}$,
then
$
\liminf_{n \to \infty}
	\abs{\Q \parens{u_n}}
>
0
$.
\end{lem}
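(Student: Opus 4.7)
The plan is to establish the strict inequality $m_\alpha(c) < m(c)$ directly via a tangent-path argument at a minimizer of $m(c)$, and then to deduce the charge bound by a routine contradiction argument that reduces to the unperturbed problem.

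Setup: under each of the hypotheses of Theorem \ref{thm:ground-states}, the unperturbed problem \eqref{eqn:minimization-problem-in-W1,2} is known (by the results of Cingolani \& Jeanjean \cite{cingolaniStationaryWavesPrescribed2019}) to admit a positive radial minimizer $\phi^* \in \sphere(c)$ which, after translation, is centered at the origin and satisfies the Euler--Lagrange equation $-\Delta \phi^* + \omega \phi^* = w_{\phi^*} \phi^* + \beta \phi^{*\,p-1}$ for some $\omega \in \real$. Elliptic regularity gives $\phi^* \in W^{2, 2}_{\loc} \hookrightarrow C^0(\real^2)$, so $\phi^*(0) > 0$ is a well-defined positive number.

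Key computation: fix $\lambda > 0$ and, for $t$ near $0$, set $v_t := s_t \parens*{\phi^* + t G_\lambda}$ where $s_t := \sqrt{c / \|\phi^* + t G_\lambda\|_{L^2}^2}$, so that $v_t \in \sphere_\alpha \parens{c}$ has decomposition $\phi_{v_t} = s_t \phi^*$, $q_{v_t} = s_t t$ and $\Q \parens{v_t} = s_t t \neq 0$ for $t \neq 0$. Starting from
\[
\mathcal{H}_\alpha \parens{v_t}
=
s_t^2 \brackets*{\|\nabla \phi^*\|_{L^2}^2 - 2 t \lambda \angles{\phi^*, G_\lambda}_{L^2} + t^2 \parens*{\alpha + \theta_\lambda - \tfrac{1}{4 \pi}}}
\]
and using $s_t^2 = 1 - 2t \angles{\phi^*, G_\lambda}_{L^2}/c + O(t^2)$ together with the first-order expansions of $\V_0 \parens{\phi^* + t G_\lambda}$ and $\C \parens{\phi^* + t G_\lambda}$, the coefficient of $t$ in $\E_\alpha \parens{v_t}$ groups into
\[
\frac{d}{dt} \E_\alpha \parens{v_t} \Big|_{t = 0}
=
\parens{\omega - \lambda} \angles{\phi^*, G_\lambda}_{L^2}
-
\int G_\lambda \parens*{w_{\phi^*} \phi^* + \beta \phi^{*\,p - 1}} \dif x,
\]
where $\omega$ enters via the identity $\|\nabla \phi^*\|_{L^2}^2 + \V_0 \parens{\phi^*} - \beta \C \parens{\phi^*} = - \omega c$ obtained by testing the Euler--Lagrange equation against $\phi^*$. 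Substituting the Euler--Lagrange equation into the remaining integral and invoking Lemma \ref{lem:Dirac-identity}, we get $\int G_\lambda \parens*{- \Delta \phi^* + \omega \phi^*} \dif x = \phi^* \parens{0} + \parens{\omega - \lambda} \angles{G_\lambda, \phi^*}_{L^2}$, so the two $\parens{\omega - \lambda} \angles{\phi^*, G_\lambda}_{L^2}$ contributions cancel and the derivative reduces to $-\phi^* \parens{0} < 0$. Since $\E_\alpha \parens{v_0} = \E \parens{\phi^*} = m \parens{c}$, this gives $m_\alpha \parens{c} \leq \E_\alpha \parens{v_t} < m \parens{c}$ for small $t > 0$.

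For the charge statement, suppose towards a contradiction that a minimizing sequence $\parens{u_n}_{n \in \nat}$ has $\Q \parens{u_n} \to 0$ along a subsequence. Fix $\lambda > 0$ and set $\phi_n := u_n - \Q \parens{u_n} G_\lambda \in X$; since $\|\Q \parens{u_n} G_\lambda\|_{L^2} = |\Q \parens{u_n}| / \sqrt{4 \pi \lambda} \to 0$, the masses $\|\phi_n\|_{L^2}^2 \to c$, and the rescaled functions $\tilde \phi_n := \sqrt{c / \|\phi_n\|_{L^2}^2}\, \phi_n \in \sphere \parens{c}$ satisfy $\E \parens{\tilde \phi_n} - \E_\alpha \parens{u_n} \to 0$ (each term of $\E_\alpha$ differs from its unperturbed counterpart by quantities controlled by $\Q \parens{u_n}$, using coercivity to ensure $\parens{\phi_n}_{n \in \nat}$ is bounded in $X$ and the continuity of $\V_0, \C$ on $X_\alpha$). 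Thus $m \parens{c} \leq \liminf_n \E \parens{\tilde \phi_n} = m_\alpha \parens{c}$, contradicting the first part. The main obstacle is the fine cancellation producing $-\phi^* \parens{0}$: the gain is driven neither by the log-divergent coefficient $\alpha + \theta_\lambda$ (which contributes only at order $t^2$) nor by the nonlinear terms, but by the interplay between the constraint variation and the Euler--Lagrange equation, assembled via the Dirac identity of Lemma \ref{lem:Dirac-identity}.
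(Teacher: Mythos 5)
Your proof of the main assertion $m_\alpha\parens{c} < m\parens{c}$ is correct. The expansion of $\calH_\alpha\parens{v_t}$, the identity $\norm{\nabla\phi^*}_{L^2}^2 + \V_0\parens{\phi^*} - \beta\C\parens{\phi^*} = -\omega c$, and the cancellation via Lemma~\ref{lem:Dirac-identity} yielding $\frac{d}{dt}\E_\alpha\parens{v_t}\big|_{t=0} = -\phi^*\parens{0} < 0$ all check out, and the differentiability of $t \mapsto \E_\alpha\parens{v_t}$ is covered by Lemma~\ref{lem:CW-2.2} since $t \mapsto v_t$ is a smooth curve in $X_\alpha$. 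The paper reaches the same strict inequality from the same ingredients (an unperturbed minimizer with $W^{2,2}_{\loc}$ regularity, its Euler--Lagrange equation, the test direction $G_\lambda$, and Lemma~\ref{lem:Dirac-identity}), but packages them as a contradiction: if $m_\alpha\parens{c} = m\parens{c}$, the minimizer of $m\parens{c}$ would satisfy a Lagrange-multiplier identity on all of $X_\alpha$, and testing it with $G_\lambda$ contradicts the Dirac identity. Your direct tangent-path version is a legitimate alternative and even produces the first-order rate $-\phi^*\parens{0}$. One small overclaim: radial symmetry of $\phi^*$ is neither needed nor what you should rely on; as in the paper, all that is required is a minimizer that is $W^{2,2}_{\loc}$ and, after a translation and a phase, satisfies $\Re\phi^*\parens{0} \neq 0$ (positivity of $\abs{\phi^*}$ suffices for this).

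The second assertion is where there is a genuine gap. Your key step is ``$\E\parens{\tilde\phi_n} - \E_\alpha\parens{u_n} \to 0$, using coercivity to ensure $\parens{\phi_n}_{n\in\nat}$ is bounded in $X$.'' Coercivity (Lemma~\ref{lem:lower-bound-for-E_alpha}) controls $\norm{\phi_n}_{\dot W^{1,2}}$ and $\abs{q_n}$, hence gives boundedness in $W^{1,2}_\alpha$ (and in $W^{1,2}$), but it says nothing about the weighted quantity $\int \log\parens*{1+\abs{x}} \abs{\phi_n\parens{x}}^2 \dif x$, which is precisely what enters the comparison of the $\V_1$-terms: writing $\abs{u_n}^2 - \abs{\phi_n}^2 = 2\Re\parens{\overline{\phi_n} q_n G_\lambda} + \abs{q_n}^2 G_\lambda^2$ and using $\log\parens{1+\abs{x-y}} \leq \log\parens{1+\abs{x}} + \log\parens{1+\abs{y}}$, the difference $\V_1\parens{u_n} - \V_1\parens{\tilde\phi_n}$ is only controlled by a quantity of order $\abs{q_n}\parens*{1 + \int \log\parens*{1+\abs{x}}\abs{u_n\parens{x}}^2\dif x}$, and at this stage nothing prevents the log-moment of a minimizing sequence from growing faster than $1/\abs{q_n}$ (mass may drift to infinity while the charge tends to zero). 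The comparisons for $\calH_\alpha$, for $\V_2$ (via Proposition~\ref{prop:HLS}) and for $\C$ are fine; the obstruction is solely $\V_1$. Note also that in the paper, control of log-moments along minimizing sequences is obtained only downstream, through Lemmas~\ref{lem:CJ-Lemma-2.5}--\ref{lem:CJ-Lemma-3.1}, which are applied \emph{after} the charge bound of the present lemma, so asserting $X$-boundedness here risks circularity. (The paper itself leaves the ``it follows that'' deduction implicit, but the specific justification you offer is not correct as stated; to close it you would need either a one-sided estimate showing $\V_1\parens{\tilde\phi_n} \leq \V_1\parens{u_n} + o\parens{1}$, or an argument giving non-vanishing of mass near the origin before invoking the energy comparison.)
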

\begin{proof}
It is clear that
$m_\alpha \parens{c} \leq m \parens{c}$
because $\E_\alpha|_X = \E$. By contradiction, suppose that
$m_\alpha \parens{c} = m \parens{c}$.
It follows from \cite{cingolaniStationaryWavesPrescribed2019}
that \eqref{eqn:minimization-problem-in-W1,2} has a solution
$\psi \in W^{2, 2}_\loc \cap X$
(the existence follows from \cite[Theorem 1.1]{cingolaniStationaryWavesPrescribed2019}, while the regularity is established in the proof of \cite[Lemma 2.7]{cingolaniStationaryWavesPrescribed2019}).
It is clear that
$
\E_\alpha \parens{\psi}
=
\E_\alpha \parens{\psi e^{\iu \theta}}
$
for $\theta \in \real$ and $\E_\alpha|_X$ is invariant by translation, so we can suppose that
$\Re \parens{\psi \parens{0}} \neq 0$.
Due to the equality
$m_\alpha \parens{c} = m \parens{c}$,
$\psi$ also solves \eqref{eqn:minimization-problem}. In particular, there exists a Lagrange multiplier $\omega \in \real$ such that
\begin{equation}
\label{proof:q-neq-0:1}
\angles*{
	\E_\alpha' \parens{\psi},
	u
}_{X_\alpha^*, X_\alpha}
+
\omega
\Re \angles{\psi, u}_{L^2}
=
0
\quad \text{for every} \quad
u \in X_\alpha.
\end{equation}
By considering the case $u = G_\lambda$ in \eqref{proof:q-neq-0:1}, we obtain
\[
\int
	\Re
	\parens*{
		-
		\lambda
		\psi \parens{x}
		+
		\omega
		\psi \parens{x}
		-
		w_\psi \parens{x} \psi \parens{x}
		-
		\beta
		\psi \parens{x}
		\abs*{\psi \parens{x}}^{p - 2}
	}
	G_\lambda \parens{x}
\dif x
=
0.
\]
On one hand, we have
\[
\int
	\Re \parens*{
		-
		\Delta \psi \parens{x}
		+
		\lambda \psi \parens{x}
	}
	G_\lambda \parens{x}
\dif x
=
0
\]
because
$
- \Delta \psi
+
\omega \psi
=
w_\psi \psi
+
\beta \psi \abs{\psi}^{p - 2}
$.
On the other hand, an application of Lemma \ref{lem:Dirac-identity} shows that
\[
\int
	\Re \parens*{
		-
		\Delta \psi \parens{x}
		+
		\lambda \psi \parens{x}
	}
	G_\lambda \parens{x}
\dif x
=
\Re \psi \parens{0}
\neq
0.
\]
We obtained a contradiction, hence the result.
\end{proof}

The next two lemmata study the link between boundedness of $\V_1$ and relative compactness in $L^2$. We begin with an adaptation of \cite[Lemma 2.5]{cingolaniStationaryWavesPrescribed2019} that is proved analogously.

\begin{lem}
\label{lem:CJ-Lemma-2.5}
Suppose that
$
\set{u_n}_{n \in \nat}
\subset
\sphere_\alpha \parens{c}
$.
If there exists $\eps \in \ooi{0, c}$ such that given $R > 0$, the inequality
\[
\liminf_{n \to \infty}
\sup_{x \in \real^2}
\int_{B_R \parens{x}}
	\abs*{u_n \parens{y}}^2
\dif y
\leq
c - \eps
\]
is satisfied, then
$
\limsup_{n \to \infty} \V_1 \parens{u_n}
=
\infty
$.
\end{lem}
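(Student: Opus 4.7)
The plan is to show that if $\parens{u_n}_{n \in \nat}$ fails to concentrate at some scale $R$ in the sense of the hypothesis, then the logarithmic weight in $\V_1$ forces $\V_1 \parens{u_n}$ to become arbitrarily large. The key pointwise bound is $\log \parens{1 + \abs{x - y}} \geq \log \parens{1 + R}$ whenever $\abs{x - y} \geq R$, which converts any ``spread-out mass'' at scale at least $R$ into an explicit lower bound on $\V_1 \parens{u_n}$ whose prefactor diverges as $R \to \infty$.

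First, I fix $R > 0$ and use the hypothesis to extract a subsequence (depending on $R$) along which, for all sufficiently large $n$,
\[
\sup_{x \in \real^2}
\int_{B_R \parens{x}} \abs*{u_n \parens{y}}^2 \dif y
\leq c - \frac{\eps}{2}.
\]
Combined with $\norm{u_n}_{L^2}^2 = c$, this yields the uniform ``far-field mass'' bound $\int_{\real^2 \setminus B_R \parens{x}} \abs{u_n \parens{y}}^2 \dif y \geq \eps / 2$ for every $x \in \real^2$. Then, by Fubini and the pointwise bound on the logarithm on the set $\set{\abs{y - x} \geq R}$,
\begin{align*}
\V_1 \parens{u_n}
&\geq
\int_{\real^2}
	\abs*{u_n \parens{x}}^2
	\int_{\set{\abs{y - x} \geq R}}
		\log \parens*{1 + \abs{x - y}}
		\abs*{u_n \parens{y}}^2
	\dif y \dif x \\
&\geq
\log \parens*{1 + R}
\int_{\real^2} \abs*{u_n \parens{x}}^2 \cdot \frac{\eps}{2} \dif x
=
\frac{c \, \eps \, \log \parens*{1 + R}}{2}.
\end{align*}

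Passing to the limit along the chosen subsequence yields $\limsup_{n \to \infty} \V_1 \parens{u_n} \geq \frac{c \eps}{2} \log \parens*{1 + R}$, and since $R > 0$ was arbitrary, sending $R \to \infty$ concludes that $\limsup_{n \to \infty} \V_1 \parens{u_n} = \infty$. I do not expect any substantive obstacle; the only bookkeeping subtlety is that the subsequence chosen in the first step depends on $R$, which is harmless because $\limsup_{n \to \infty} \V_1 \parens{u_n}$ dominates the limit of $\V_1 \parens{u_{n_k}}$ along any subsequence. Notably, nothing in the argument interacts with the point interaction, since the estimate takes place purely at the $L^2$-level; this is why the lemma is a direct adaptation of \cite[Lemma 2.5]{cingolaniStationaryWavesPrescribed2019}.
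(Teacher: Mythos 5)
Your proof is correct and takes essentially the same route as the paper, which proves Lemma \ref{lem:CJ-Lemma-2.5} only by noting it is "proved analogously" to \cite[Lemma 2.5]{cingolaniStationaryWavesPrescribed2019}: the argument there is exactly your combination of the uniform far-field mass bound $\int_{\{|y-x|\ge R\}}|u_n|^2\,\dif y\ge\eps/2$ with the pointwise estimate $\log(1+|x-y|)\ge\log(1+R)$, followed by letting $R\to\infty$. Your handling of the $R$-dependent subsequence via the $\limsup$ is also correct.
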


Next, we develop a result in the spirit of \cite[Lemma 2.6]{cingolaniStationaryWavesPrescribed2019}.

\begin{lem}
\label{lem:relatively-compact-in-L^2}
Let
$
\set{u_n}_{n \in \nat}
\subset
\sphere_\alpha \parens{c}
$
be such that
$\set{q_n := \Q \parens{u_n}}_{n \in \nat}$
is bounded and bounded away from zero. Given
$n \in \nat$, let
\[
\phi_n
=
u_n - q_n G_{\frac{\abs{q_n}^2}{c}}
\in
X.
\]
If $\set{\V_1 \parens{u_n}}_{n \in \nat}$
is bounded and $\set{\phi_n}_{n \in \nat}$ is bounded in $W^{1, 2}$, then
$\set{u_n}_{n \in \nat}$ is relatively compact in $L^2$.
\end{lem}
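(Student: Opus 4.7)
The plan is to combine the Lions-type nonvanishing supplied by the contrapositive of Lemma \ref{lem:CJ-Lemma-2.5} with a lower bound on the $L^2$-mass of $u_n$ near the origin. Crucially, the Green's function component $q_n G_{\abs{q_n}^2 / c}$ of $u_n$ is anchored at $0$ and cannot be cancelled by the $W^{1, 2}$ remainder $\phi_n$, so the resulting nonvanishing near $0$ will block any would-be concentration point $x_n$ from escaping to infinity. The main obstacle, absent in the translation-invariant setting of \cite{cingolaniStationaryWavesPrescribed2019}, is exactly this boundedness of $\set{x_n}$.

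First, I would extract a subsequence along which $q_n \to q$ with $q \neq 0$, so that $\lambda_n := \abs{q_n}^2 / c \to \lambda := \abs{q}^2 / c > 0$ and $q_n G_{\lambda_n} \to q G_\lambda$ in $L^2$. The $W^{1, 2}$-boundedness of $\set{\phi_n}$ allows a further extraction with $\phi_n \rightharpoonup \phi$ weakly in $W^{1, 2}$ and $\phi_n \to \phi$ strongly in $L^2_\loc$; set $u_\infty := \phi + q G_\lambda$, so that $u_n \to u_\infty$ in $L^2_\loc$. The key observation is that $u_\infty$ cannot vanish on any neighborhood of the origin: indeed, $u_\infty \equiv 0$ on $B_1 \parens{0}$ would force $\phi = - q G_\lambda$ there, which is incompatible with $\phi \in W^{1, 2}$ since the logarithmic behavior of $G_\lambda$ near $0$ produces a nonintegrable $\abs{x}^{- 2}$ singularity in $\abs{\nabla G_\lambda}^2$. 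Therefore $\delta_0 := \int_{B_1 \parens{0}} \abs{u_\infty}^2 > 0$ and, for all $n$ large, $\int_{B_1 \parens{0}} \abs{u_n}^2 \geq \delta_0 / 2$.

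Given any $\eps \in \ooi{0, c}$, the contrapositive of Lemma \ref{lem:CJ-Lemma-2.5} yields $R > 0$ and, along a subsequence, points $x_n \in \real^2$ with $\int_{B_R \parens{x_n}} \abs{u_n}^2 > c - \eps$. I would then show $\set{x_n}$ is bounded by contradiction: if $\abs{x_{n_k}} \to \infty$ along a subsequence, then the balls $B_R \parens{x_{n_k}}$ and $B_1 \parens{0}$ become disjoint and increasingly distant, so estimating $\V_1$ from below on the rectangle $B_R \parens{x_{n_k}} \times B_1 \parens{0}$ gives
\[
\V_1 \parens{u_{n_k}}
\geq
\log \parens*{1 + \abs{x_{n_k}} - R - 1}
\cdot
\parens{c - \eps}
\cdot
\frac{\delta_0}{2}
\xrightarrow[k \to \infty]{}
\infty,
\]
contradicting the boundedness of $\V_1 \parens{u_n}$. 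With $\set{x_n}$ bounded, $u_n$ has mass at least $c - \eps$ in some fixed ball around the origin, so $\set{u_n}$ is tight in $L^2$ as $\eps$ was arbitrary. Combining tightness with $\norm{u_n}_{L^2}^2 = c$ and the strong $L^2_\loc$ convergence to $u_\infty$ yields $u_n \to u_\infty$ in $L^2 \parens{\real^2}$, establishing the claimed relative compactness.
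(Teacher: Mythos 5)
Your proof is correct, but it handles the crucial step --- ruling out escape of the concentration points to infinity --- by a genuinely different mechanism than the paper. The paper first argues as in \cite[Lemma 2.6]{cingolaniStationaryWavesPrescribed2019} to produce translations $x_n$ for which $\parens{u_n \parens{\cdot - x_n}}_{n}$ is relatively compact in $L^2$, and then excludes $\abs{x_n} \to \infty$ by a weak-convergence argument: the translated Green's functions $q_n G_{\abs{q_n}^2/c} \parens{\cdot - x_n}$ tend weakly to zero, so the strong $L^2$ limit of $u_n \parens{\cdot - x_n}$ must coincide with the weak $W^{1,2}$ limit $\phi_\infty$ of $\phi_n \parens{\cdot - x_n}$, and undoing the translation forces the limiting charge $q_\infty$ to vanish, contradicting the hypothesis that $\set{q_n}$ is bounded away from zero. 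You never translate: you extract the local limit $u_\infty = \phi + q G_\lambda$, observe that it cannot vanish near the origin because $G_\lambda \notin W^{1,2}$ locally (the $\abs{x}^{-2}$ singularity of $\abs{\nabla G_\lambda}^2$), and then exploit the nonnegativity and logarithmic growth of the kernel of $\V_1$ to show that a ball carrying mass $c - \eps$ drifting to infinity would make $\V_1 \parens{u_n}$ blow up against the mass pinned at the origin; tightness plus Rellich for $\set{\phi_n}$ then yields strong $L^2$ convergence. Your route is more quantitative and essentially self-contained --- it uses only the contrapositive of Lemma \ref{lem:CJ-Lemma-2.5}, not the full argument of \cite[Lemma 2.6]{cingolaniStationaryWavesPrescribed2019} --- at the price of two small facts worth making explicit: the $L^2$-continuity of $\lambda \mapsto G_\lambda$ (so that $q_n G_{\abs{q_n}^2/c} \to q G_\lambda$ in $L^2$), and that the concentration estimate $\int_{B_R \parens{x_n}} \abs{u_n}^2 > c - \eps$ holds for all sufficiently large $n$ rather than merely along a subsequence, which is what the contrapositive of Lemma \ref{lem:CJ-Lemma-2.5} actually gives and what the tightness step requires; the paper's route, by contrast, buys brevity by reusing the Cingolani--Jeanjean compactness machinery wholesale.
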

\begin{proof}
Lemma \ref{lem:CJ-Lemma-2.5} is analogous to \cite[Lemma 2.5]{cingolaniStationaryWavesPrescribed2019}, so it suffices to argue precisely as in the proof of \cite[Lemma 2.6]{cingolaniStationaryWavesPrescribed2019} to deduce that there exists
$\set{x_n}_{n \in \nat} \subset \real^2$
such that
$\parens{u_n \parens{\cdot - x_n}}_{n \in \nat}$
is relatively compact in $L^2$.

We only have to show that
$\set{x_n}_{n \in \nat}$
is bounded to conclude that
$\parens{u_n}_{n \in \nat}$
is relatively compact in $L^2$. By contradiction, suppose that, up to subsequence,
$\abs{x_n} \to \infty$ as $n \to \infty$.

Let us show that
\begin{equation}
\label{eqn:q_n-to-zero}
q_n
G_{\frac{\abs{q_n}^2}{c}} \parens{\cdot - x_n}
\xrightharpoonup[n \to \infty]{L^2}
0.
\end{equation}
Indeed,
$
\set{
	q_n G_{\abs{q_n}^2 / c} \parens{\cdot - x_n}
}_{n \in \nat}
$
is bounded in $L^2$ due to \eqref{eqn:mass-of-G_lambda} and it is easy to check that
\[
\angles*{
	q_n
	G_{\frac{\abs{q_n}^2}{c}}
		\parens{\cdot - x_n}
	,
	\psi
}_{L^2}
\xrightarrow[n \to \infty]{}
0
\]
for every $\psi \in C_c^\infty$. As such, the result follows from \cite[Proposition 5.1.2]{willemFunctionalAnalysisFundamentals2022}.

The set
$\set{\phi_n \parens{\cdot - x_n}}_{n \in \nat}$
is bounded in $W^{1, 2}$, so there exists
$\phi_\infty \in W^{1, 2}$
such that, up to subsequence,
\begin{equation}
\label{eqn:phi_n-to-phi_infty}
\phi_n \parens{\cdot - x_n}
\xrightharpoonup[n \to \infty]{W^{1, 2}}
\phi_\infty.
\end{equation}
It follows from \eqref{eqn:q_n-to-zero} and \eqref{eqn:phi_n-to-phi_infty} that
\[
u_n \parens{\cdot - x_n}
\xrightharpoonup[n \to \infty]{L^2}
\phi_\infty.
\]
The set
$\set{u_n \parens{\cdot - x_n}}_{n \in \nat}$
is relatively compact in $L^2$, so the convergence
\[
u_n \parens{\cdot - x_n}
=
\phi_n \parens{\cdot - x_n}
+
q_n
G_{\frac{\abs{q_n}^2}{c}} \parens{\cdot - x_n}
\xrightarrow[n \to \infty]{L^2}
\phi_\infty
\]
holds up to subsequence. Equivalently,
\[
\phi_n - \phi_\infty \parens{\cdot + x_n}
+
q_n
G_{\frac{\abs{q_n}^2}{c}}
\xrightarrow[n \to \infty]{L^2}
0.
\]
The set $\set{q_n}_{n \in \nat}$ is bounded and bounded away from zero, so there exists
$q_\infty \in \complex \setminus \set{0}$
such that, up to subsequence, $q_n \to q_\infty$ as
$n \to \infty$. Therefore,
\begin{equation}
\label{eqn:phi_n-phi_infty-in-L2}
\phi_n - \phi_\infty \parens{\cdot + x_n}
\xrightarrow[n \to \infty]{L^2}
- q_\infty G_{\frac{\abs{q_\infty}^2}{c}}.
\end{equation}
By comparing \eqref{eqn:phi_n-to-phi_infty} and \eqref{eqn:phi_n-phi_infty-in-L2}, we deduce that $q_\infty = 0$. We just obtained a contradiction, hence the result.
\end{proof}

In view of the previous results, we can prove the following minor variations of \cite[Lemmata 3.1 and 3.2]{cingolaniStationaryWavesPrescribed2019} with analogous arguments.

\begin{lem}
\label{lem:CJ-Lemma-3.1}
Suppose that
$
\set{u_n}_{n \in \nat}
\subset
\sphere_\alpha \parens{c}
$,
$\set{u_n}_{n \in \nat}$
is bounded in $W^{1, 2}_\alpha$,
$\set{\E_\alpha \parens{u_n}}_{n \in \nat}$ is bounded from above,
$\set{\Q \parens{u_n}}_{n \in \nat}$
is bounded and bounded away from zero. Then, up to subsequence,
$\parens{u_n}_{n \in \nat}$
is weakly convergent on $X_\alpha$.
\end{lem}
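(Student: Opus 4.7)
The plan is to show that $\parens{u_n}_{n \in \nat}$ is bounded in $X_\alpha$, whence weak convergence of a subsequence follows from the reflexivity of this Hilbert space. Boundedness in $W^{1, 2}_\alpha$ is already assumed, so the only thing to control is the logarithmic weight term $\int \log \parens{1 + \abs{x}} \abs{u_n \parens{x}}^2 \dif x$.

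First I would bound $\set{\V_1 \parens{u_n}}_{n \in \nat}$. Using $\V_0 = \V_1 - \V_2$ and rearranging the definition of $\E_\alpha \parens{u_n}$,
\[
\V_1 \parens{u_n}
=
4 \E_\alpha \parens{u_n}
-
2 \calH_\alpha \parens{u_n}
+
\V_2 \parens{u_n}
+
\frac{4 \beta}{p} \C \parens{u_n}.
\]
The hypothesis bounds $\E_\alpha \parens{u_n}$ from above; since $\parens{u_n}_{n \in \nat}$ is bounded in $W^{1, 2}_\alpha$ and $\norm{u_n}_{L^2}^2 = c$, the identity $\calH_\alpha \parens{u_n} = \norm{u_n}_{W^{1, 2}_\alpha}^2 - \parens{1 + \omega_\alpha} c$ yields boundedness of $\calH_\alpha \parens{u_n}$. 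The remaining two terms are handled via $W^{1, 2}_\alpha \hookrightarrow L^r$ for $r \in \coi{2, \infty}$: combining Hardy--Littlewood--Sobolev with $\log \parens{1 + 1/t} \leq 1/t$ yields $\V_2 \parens{u_n} \lesssim \norm{u_n}_{L^{8/3}}^4$, while $\C \parens{u_n} = \norm{u_n}_{L^p}^p$ is bounded directly.

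Next I would invoke the convenient decomposition of Section \ref{sect:convenient}, writing $u_n = \phi_n + q_n G_{\lambda_n}$ with $q_n := \Q \parens{u_n}$ and $\lambda_n = \abs{q_n}^2 / c$. Since $\set{q_n}_{n \in \nat}$ is bounded and bounded away from zero, so is $\set{\lambda_n}_{n \in \nat}$, hence $\set{\theta_{\lambda_n}}_{n \in \nat}$ is bounded. Solving
\[
\calH_\alpha \parens{u_n}
=
\norm{\phi_n}_{\dot{W}^{1, 2}}^2
+
\lambda_n \parens*{\norm{\phi_n}_{L^2}^2 - c}
+
\parens{\alpha + \theta_{\lambda_n}} \abs{q_n}^2
\]
for $\norm{\phi_n}_{\dot{W}^{1, 2}}^2$ shows that $\set{\phi_n}_{n \in \nat}$ is bounded in $W^{1, 2}$. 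Combined with the $\V_1$ bound, Lemma \ref{lem:relatively-compact-in-L^2} applies and produces, up to subsequence, an $L^2$-convergent sequence $u_n \to u_\infty$ with $\norm{u_\infty}_{L^2}^2 = c > 0$.

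Finally, I would convert this $L^2$-concentration into the required bound on the logarithmic weight. Choose $R > 0$ so that $\int_{B_R \parens{0}} \abs{u_\infty}^2 \geq c/2$; then $\int_{B_R \parens{0}} \abs{u_n}^2 \geq c/4$ for $n$ sufficiently large. For $\abs{x} \geq 2 R$ and $y \in B_R \parens{0}$ we have $\abs{x - y} \geq \abs{x}/2$, hence
\[
\V_1 \parens{u_n}
\geq
\frac{c}{4}
\int_{\abs{x} \geq 2 R}
\log \parens*{1 + \frac{\abs{x}}{2}}
\abs{u_n \parens{x}}^2
\dif x.
\]
Combined with $\log \parens{1 + \abs{x}} \leq \log 2 + \log \parens{1 + \abs{x}/2}$ outside and the trivial estimate on $\cbrackets{\abs{x} \leq 2 R}$, this furnishes the desired uniform bound. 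The main obstacle is exactly this final step: unlike in Cingolani \& Jeanjean's translation-invariant setting \cite{cingolaniStationaryWavesPrescribed2019}, here we cannot translate the mass to the origin, and it is precisely the hypothesis $\abs{\Q \parens{u_n}} \gtrsim 1$---via Lemma \ref{lem:relatively-compact-in-L^2}---that anchors the mass close enough to the origin to make the direct tail estimate succeed.
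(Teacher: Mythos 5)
Your proof is correct and takes essentially the route the paper intends: it reproduces the Cingolani--Jeanjean Lemma 3.1 scheme (bound $\V_1$ via the energy identity, HLS for $\V_2$ and the $L^p$ embedding, then the convenient decomposition to bound $\phi_n$ in $W^{1,2}$), replacing the translation step by Lemma \ref{lem:relatively-compact-in-L^2} so that the mass concentrates near the origin and the logarithmic moment, hence the $X_\alpha$-norm, stays bounded. All steps check out, including the use of the charge hypothesis exactly where the paper's adaptation requires it.
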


\begin{lem}
\label{lem:CJ-Lemma-3.2}
Suppose that
$
\set{u_n}_{n \in \nat}
\subset
\sphere_\alpha \parens{c}
$ 
is such that $u_n \rightharpoonup u$ in $X_\alpha$ as $n \to \infty$. Then
$
\E_\alpha \parens{u}
\leq
\liminf_{n \to \infty}
\E_\alpha \parens{u_n}
$.
If we suppose further that
$\E_\alpha \parens{u_n} \to \E_\alpha \parens{u}$
as $n \to \infty$, then $u_n \to u$ in $X_\alpha$ as $n \to \infty$.
\end{lem}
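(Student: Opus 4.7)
My plan is to exhibit $\E_\alpha$ as a sum of weakly lower semicontinuous and weakly continuous pieces, then exploit the equality $\E_\alpha \parens{u_n} \to \E_\alpha \parens{u}$ to extract individual convergence of each piece. First, by the compact embedding $X_\alpha \hookrightarrow L^r$ for every $r \in \coi{2, \infty}$ from Lemma \ref{lem:CW-2.2}, I extract a subsequence for which $u_n \to u$ strongly in each $L^r$ and a.e. I then split
\[
\E_\alpha
=
\tfrac{1}{2} \calH_\alpha
+
\tfrac{1}{4} \V_1
-
\tfrac{1}{4} \V_2
-
\tfrac{\beta}{p} \C.
\]
The form part $\tfrac{1}{2} \calH_\alpha$ is weakly lower semicontinuous because
$
\calH_\alpha \parens{v}
=
\norm{v}_{W^{1,2}_\alpha}^2
-
\parens{1 + \omega_\alpha} \norm{v}_{L^2}^2
$,
where the first summand inherits lower semicontinuity from the Hilbert structure (via the continuous embedding $X_\alpha \hookrightarrow W^{1,2}_\alpha$) and the second is continuous by $L^2$-compactness. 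The long-range interaction $\tfrac{1}{4} \V_1$ is weakly lower semicontinuous by Fatou's lemma applied to the pointwise limit. The short-range interaction $\tfrac{1}{4} \V_2$ is continuous on $L^{8/3}$ (via $\log \parens{1 + 1/t} \leq 1/t$ and the Hardy--Littlewood--Sobolev inequality), and $\tfrac{\beta}{p} \C$ is continuous on $L^p$. Summing yields $\E_\alpha \parens{u} \leq \liminf_n \E_\alpha \parens{u_n}$.

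Under the additional hypothesis $\E_\alpha \parens{u_n} \to \E_\alpha \parens{u}$, I would invoke the elementary sandwich principle that if $\liminf a_n \geq a$, $\liminf b_n \geq b$, and $a_n + b_n \to a + b$, then $a_n \to a$ and $b_n \to b$. Applied to the two lower semicontinuous summands, this forces $\calH_\alpha \parens{u_n} \to \calH_\alpha \parens{u}$ and $\V_1 \parens{u_n} \to \V_1 \parens{u}$. The first convergence combined with $\norm{u_n}_{L^2}^2 \to \norm{u}_{L^2}^2$ yields $\norm{u_n}_{W^{1,2}_\alpha} \to \norm{u}_{W^{1,2}_\alpha}$, which together with weak convergence in the Hilbert space $W^{1,2}_\alpha$ gives $u_n \to u$ strongly in $W^{1,2}_\alpha$.

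The hard part will be upgrading to strong convergence in $X_\alpha$, which amounts to showing that
$
\int
\log \parens{1 + \abs{x}}
\abs{u_n - u}^2
\dif x
\to
0
$.
My plan is to leverage $\V_1 \parens{u_n} \to \V_1 \parens{u}$ via a Brezis--Lieb style expansion. Setting $v_n := u_n - u$ and writing $\abs{u_n}^2 = \abs{u}^2 + \abs{v_n}^2 + 2 \Re \parens{\bar{u} v_n}$, the cross terms involving $\Re \parens{\bar{u} v_n}$ vanish in the limit because $v_n \rightharpoonup 0$ in $X_\alpha$ and the associated test functionals — obtained by convolving $\log \parens{1 + \abs{\cdot}}$ against $\abs{u}^2$ or $\abs{v_n}^2$ — define bounded functionals on $X_\alpha$ (controlled in terms of the weighted $L^2$-norm that enters $\norm{\cdot}_{X_\alpha}$). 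What survives is
\[
\V_1 \parens{v_n}
+
2
\int \int
	\log \parens{1 + \abs{x - y}}
	\abs*{u \parens{x}}^2
	\abs*{v_n \parens{y}}^2
\dif x \dif y
\xrightarrow[n \to \infty]{} 0,
\]
and since both summands are nonnegative, each vanishes. The second one controls $\int \log \parens{1 + \abs{y}} \abs{v_n \parens{y}}^2 \dif y$: applying $\log \parens{1 + \abs{x - y}} \geq \log \parens{1 + \abs{y}} - \log \parens{1 + \abs{x}}$ and integrating in $x$ gives $\int \log \parens{1 + \abs{x - y}} \abs{u \parens{x}}^2 \dif x \geq c \log \parens{1 + \abs{y}} - \int \log \parens{1 + \abs{x}} \abs{u \parens{x}}^2 \dif x$, which dominates $\tfrac{c}{2} \log \parens{1 + \abs{y}}$ on the tail $\abs{y} \geq R$ for $R$ large; the contribution on $\abs{y} \leq R$ is absorbed by the strong $L^2$-convergence of $v_n$.
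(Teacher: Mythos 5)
Your proposal is correct and is essentially the argument the paper has in mind: the paper proves this lemma by adapting Cingolani--Jeanjean's Lemma 3.2, i.e., precisely your splitting of $\E_\alpha$ into the weakly lower semicontinuous pieces $\calH_\alpha$ (via the $W^{1,2}_\alpha$-norm minus the compactly convergent $L^2$-part) and $\V_1$ (via Fatou), plus the pieces $\V_2$ and $\C$ that are continuous under the compact embeddings of Lemma \ref{lem:CW-2.2}, followed by the expansion of $\V_1(u_n)$ around the weak limit and the lower bound $\int \log(1+\abs{x-y})\abs{u(x)}^2 \, \dif x \geq c \log(1+\abs{y}) - C$ to recover the weighted $L^2$-part of the $X_\alpha$-norm. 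The only step worth writing out explicitly is that the $n$-dependent cross terms (those with coefficient $\abs{u_n-u}^2$) are not handled by weak convergence against a fixed functional but by the elementary bound $\log(1+\abs{x-y}) \leq \log(1+\abs{x})+\log(1+\abs{y})$ together with $\norm{u_n-u}_{L^2} \to 0$ and the boundedness of $(u_n-u)$ in $X_\alpha$, which is exactly what your parenthetical remark about control through the weighted $L^2$-norm suggests.
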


At this point, we need to recall two useful inequalities. The following lemma consists of a particular case of the Hardy--Littlewood--Sobolev inequality \cite[4.3 Theorem]{liebAnalysis2001}.

\begin{prop}
\label{prop:HLS}
Suppose that $r, s > 1$ are such that
$r^{- 1} + s^{- 1} = 3 / 2$. Then there exists a positive constant
$K_{\HLS} \parens{r}$
such that
\[
\abs*{
\int \int
	\frac{f \parens{x} g \parens{y}}{\abs{x - y}}
\dif x \dif y
}
\leq
K_{\HLS} \parens{r}
\norm{f}_{L^r}
\norm{g}_{L^s}
\]
for every $f \in L^r$ and $g \in L^s$.
\end{prop}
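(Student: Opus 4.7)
The plan is to deduce the proposition directly from the general Hardy--Littlewood--Sobolev inequality, as stated in \cite[4.3 Theorem]{liebAnalysis2001}. That theorem asserts that, in dimension $n \geq 1$ and for $\lambda \in \ooi{0, n}$, there exists a constant $C = C \parens{n, \lambda, r} > 0$ such that
\[
\abs*{
\int_{\real^n} \int_{\real^n}
	\frac{f \parens{x} g \parens{y}}{\abs{x - y}^\lambda}
\dif x \dif y
}
\leq
C \norm{f}_{L^r} \norm{g}_{L^s}
\]
whenever $r, s > 1$ satisfy the scaling relation $1 / r + \lambda / n + 1 / s = 2$.

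First I would specialize to $n = 2$ and $\lambda = 1$, so that the scaling condition collapses to $1 / r + 1 / s = 3 / 2$, which is precisely the hypothesis of the proposition. Setting $K_{\HLS} \parens{r} := C \parens{2, 1, r}$ then yields the stated bound verbatim. The admissibility of the exponents is automatic: combining $r, s > 1$ with $1 / r + 1 / s = 3 / 2$ forces $r, s \in \ooi{1, 2}$, so both factors on the right are finite for any $f \in L^r$ and $g \in L^s$.

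There is no genuine obstacle here; the proposition is recorded essentially to fix notation and pin down the precise constant $K_{\HLS} \parens{r}$ that will be invoked later---most notably when controlling the functional $\V_2$ through the elementary inequality $\log \parens{1 + 1 / t} \leq 1 / t$, valid for every $t > 0$, which converts the logarithmic kernel appearing in $\V_2$ into the Riesz kernel $\abs{x - y}^{- 1}$ on which HLS applies directly.
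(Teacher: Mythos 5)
Your proposal is correct and matches the paper exactly: the paper offers no independent proof, recording the statement as the particular case $n = 2$, $\lambda = 1$ of the Hardy--Littlewood--Sobolev inequality in \cite[4.3 Theorem]{liebAnalysis2001}, which is precisely your specialization. Nothing further is needed.
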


The next result contains a recent generalization of the classical Gagliardo--Nirenberg inequality.

\begin{prop}[{\cite[Proposition 2.2]{adamiGroundStatesPlanar2022}}]
\label{ABCT22:Proposition-2.2}
Suppose that $p \in \ooi{2, \infty}$.
\begin{enumerate}
\item
There exists a positive constant
$K_{\GN} \parens{p}$
such that
\[
\norm{u}_{L^p}^p
\leq
K_{\GN} \parens{p}
\parens*{
	\norm{\phi}_{\dot{W}^{1, 2}}^{p - 2}
	\norm{\phi}_{L^2}^2
	+
	\frac{\abs{q}^p}{\lambda}
}
\]
for every
$u = \phi + q G_\lambda \in W^{1, 2}_\alpha$.
\item
There exists a positive constant
$
\widetilde{K}_{\GN} \parens{p}
$
such that
\[
\norm{u}_{L^p}^p
\leq
\widetilde{K}_{\GN} \parens{p}
\parens*{
	\norm{\phi}_{\dot{W}^{1, 2}}^{p - 2}
	+
	\abs{q}^{p - 2}
}
\norm{u}_{L^2}^2
\]
for every
$
u
=
\phi + q G_{\abs{q}^2 / \norm{u}_{L^2}^2}
\in
W^{1, 2}_\alpha \setminus W^{1, 2}
$.
\end{enumerate}
\end{prop}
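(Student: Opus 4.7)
The plan is to prove both parts by separating $u = \phi + q G_\lambda$ into its two components, applying the classical planar Gagliardo--Nirenberg inequality to $\phi$, and exploiting the explicit scaling of $\|G_\lambda\|_{L^p}^p$ in $\lambda$. For part (1), I would first apply the triangle inequality in $L^p$ and the elementary convexity estimate $(a+b)^p \leq 2^{p-1}(a^p + b^p)$ to reduce matters to bounding $\|\phi\|_{L^p}^p$ and $|q|^p \|G_\lambda\|_{L^p}^p$ separately. For the $\phi$-term I would invoke the standard two-dimensional Gagliardo--Nirenberg interpolation
\[
\norm{\phi}_{L^p}^p
\leq
C_p
\norm{\phi}_{\dot{W}^{1, 2}}^{p - 2}
\norm{\phi}_{L^2}^2
\]
valid for every $p > 2$ and $\phi \in W^{1, 2}$. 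For the $G_\lambda$-term I would carry out the change of variable $y = \sqrt{\lambda}\, x$, which yields
\[
\norm{G_\lambda}_{L^p}^p
=
\frac{1}{(2 \pi)^p \lambda}
\int_{\real^2} K_0 \parens{\abs{y}}^p \dif y
=
\frac{C_p'}{\lambda},
\]
where the finiteness of the integral follows from the logarithmic singularity of $K_0$ at the origin (integrable in $\real^2$ to any power) together with the exponential decay of $K_0$ at infinity. Combining these two estimates gives part (1).

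For part (2), the idea is simply to substitute $\lambda = \abs{q}^2 / \norm{u}_{L^2}^2$ into the inequality from part (1). This choice is precisely engineered so that
\[
\frac{\abs{q}^p}{\lambda}
=
\abs{q}^{p - 2}
\norm{u}_{L^2}^2,
\]
which produces the second term on the right-hand side of part (2). To handle the first term, I would invoke the convenient decomposition estimate from Section \ref{sect:convenient}, namely
\[
\norm{\phi}_{L^2}
\leq
\parens*{1 + \tfrac{1}{\sqrt{4 \pi}}}
\norm{u}_{L^2},
\]
which allows $\norm{\phi}_{L^2}^2$ to be absorbed into $\norm{u}_{L^2}^2$ up to a universal constant. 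This would immediately yield the desired inequality with $\widetilde{K}_{\GN} \parens{p}$.

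The only nontrivial technical point is the computation $\norm{G_\lambda}_{L^p}^p = C_p'/\lambda$, which hinges on the integrability of $K_0^p$ on $\real^2$; this is straightforward given the asymptotic behavior recalled in Section \ref{sect:Laplacian} ($K_0 \parens{r} \sim - \log r$ near $0$ and exponential decay at infinity), but it is the one step that genuinely uses information about the Green's function beyond its definition. Everything else is a combination of the classical planar Gagliardo--Nirenberg inequality and the triangle inequality, so I do not foresee a serious obstacle; the main care must simply be taken to bookkeep the separate contributions of the regular part $\phi$ and the singular part $q G_\lambda$ and to exploit the fact that in part (2) the parameter $\lambda$ is no longer free but dictated by the charge and the mass of $u$.
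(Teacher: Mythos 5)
Your argument is correct, and it is essentially the standard proof of this result: the paper itself gives no proof, importing the statement directly from \cite[Proposition 2.2]{adamiGroundStatesPlanar2022}, whose argument follows the same route you describe (split $u=\phi+qG_\lambda$, apply the classical planar Gagliardo--Nirenberg inequality to $\phi$, and use the scaling $\norm{G_\lambda}_{L^p}^p=C_p'/\lambda$ coming from the substitution $y=\sqrt{\lambda}\,x$ together with the integrability of $K_0^p$). Your derivation of part (2) from part (1) by choosing $\lambda=\abs{q}^2/\norm{u}_{L^2}^2$ and absorbing $\norm{\phi}_{L^2}$ into $\norm{u}_{L^2}$ via the estimate of Section \ref{sect:convenient} is likewise sound.
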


Now, we collect lower bounds for
$\E_\alpha$ obtained by means of Propositions \ref{prop:HLS} and \ref{ABCT22:Proposition-2.2}.

\begin{lem}
\label{lem:lower-bound-for-E_alpha}
\begin{enumerate}
\item
If $\beta \leq 0$, then
\[
\E_\alpha \parens{\phi}
\geq
\frac{1}{2}
\norm{\phi}_{\dot{W}^{1, 2}}^2
-
\frac{1}{4}
K_{\HLS} \parens*{\frac{4}{3}}
K_{\GN} \parens*{\frac{8}{3}}
\norm{\phi}_{L^2}^3
\norm{\phi}_{\dot{W}^{1, 2}}
\]
for every $\phi \in X$ and
\begin{align*}
\E_\alpha \parens{u}
&\geq
\frac{1}{2}
\norm{\phi}_{\dot{W}^{1, 2}}^2
+
\frac{
	\norm{\phi}_{L^2}^2
	\abs{q}^2
}{2 \norm{u}_{L^2}^2}
+
\\
&+
\frac{1}{2}
\parens*{
	\alpha
	+
	\frac{\gamma}{2 \pi}
	+
	\frac{1}{2 \pi}
	\log \parens*{
		\frac{\abs{q}}{2 \norm{u}_{L^2}}
	}
	-
	1
}
\abs{q}^2
+
\\
&-
\frac{1}{2 \sqrt{2}}
K_{\HLS} \parens*{\frac{4}{3}}
\widetilde{K}_{\GN} \parens*{\frac{8}{3}}
\norm{u}_{L^2}^3
\parens*{
	\norm{\phi}_{\dot{W}^{1, 2}}
	+
	\abs{q}
}
\end{align*}
for every
$
u
=
\phi + q G_{\abs{q}^2 / \norm{u}_{L^2}^2}
\in
X_\alpha \setminus X
$.
\item
If $\beta > 0$, then
\begin{align*}
\E_\alpha \parens{\phi}
&\geq
\frac{1}{2}
\norm{\phi}_{\dot{W}^{1, 2}}^2
-
\frac{1}{4}
K_{\HLS}\parens*{\frac{4}{3}}
K_{\GN} \parens*{\frac{8}{3}}
\norm{\phi}_{L^2}^3
\norm{\phi}_{\dot{W}^{1, 2}}
+
\\
&-
\frac{\beta}{p}
K_{\GN} \parens{p}
\norm{\phi}_{L^2}^2
\norm{\phi}_{\dot{W}^{1, 2}}^{p - 2}
\end{align*}
for every $\phi \in X$ and
\begin{align*}
\E_\alpha \parens{u}
&\geq
\frac{1}{2}
\norm{\phi}_{\dot{W}^{1, 2}}^2
+
\frac{
	\norm{\phi}_{L^2}^2
	\abs{q}^2
}{2 \norm{u}_{L^2}^2}
+
\\
&+
\frac{1}{2}
\parens*{
	\alpha
	+
	\frac{\gamma}{2 \pi}
	+
	\frac{1}{2 \pi}
	\log \parens*{
		\frac{\abs{q}}{2 \norm{u}_{L^2}}
	}
	-
	1
}
\abs{q}^2
+
\\
&-
\frac{1}{2 \sqrt{2}}
K_{\HLS} \parens*{\frac{4}{3}}
\widetilde{K}_{\GN} \parens*{\frac{8}{3}}
\norm{u}_{L^2}^3
\parens*{
	\norm{\phi}_{\dot{W}^{1, 2}}
	+
	\abs{q}
}
+
\\
&-
\frac{\beta}{p}
\widetilde{K}_{\GN} \parens{p}
\norm{u}_{L^2}^2
\parens*{
	\norm{\phi}_{\dot{W}^{1, 2}}^{p - 2}
	+
	\abs{q}^{p - 2}
}
\end{align*}
for every
$
u
=
\phi + q G_{\abs{q}^2 / \norm{u}_{L^2}^2}
\in
X_\alpha \setminus X
$.
\end{enumerate}
\end{lem}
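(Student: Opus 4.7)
The plan is to bound the three summands of $\E_\alpha(u) = \tfrac{1}{2}\calH_\alpha(u) + \tfrac{1}{4}\V_0(u) - \tfrac{\beta}{p}\C(u)$ separately. For the quadratic form, when $u = \phi \in X$ one has directly $\calH_\alpha(\phi) = \|\phi\|_{\dot{W}^{1,2}}^2$ (choose the decomposition $\phi = \phi + 0 \cdot G_\lambda$); when $u = \phi + q G_{|q|^2/\|u\|_{L^2}^2} \in X_\alpha \setminus X$ is the convenient decomposition of Section \ref{sect:convenient}, substituting $\lambda = |q|^2/\|u\|_{L^2}^2$ into
\[
\calH_\alpha(u)
=
\|\phi\|_{\dot{W}^{1,2}}^2
+
\lambda\bigl(\|\phi\|_{L^2}^2 - \|u\|_{L^2}^2\bigr)
+
(\alpha + \theta_\lambda)\,|q|^2
\]
and expanding $\theta_\lambda = \tfrac{\gamma}{2\pi} + \tfrac{1}{2\pi}\log(\sqrt{\lambda}/2)$ produces precisely the first three summands on the right-hand side of the desired inequality, once divided by $2$.

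For the logarithmic interaction, I would use $\V_0 = \V_1 - \V_2$ together with $\V_1 \geq 0$ to get $\tfrac{1}{4}\V_0(u) \geq -\tfrac{1}{4}\V_2(u)$. The elementary inequality $\log(1 + 1/t) \leq 1/t$ for $t > 0$ (already invoked in Section \ref{sect:vrtnl-frmwrk}) yields
\[
\V_2(u)
\leq
\int\int \frac{|u(x)|^2\,|u(y)|^2}{|x - y|}\,\dif x\,\dif y,
\]
and Proposition \ref{prop:HLS} applied with $r = s = 4/3$ gives $\V_2(u) \leq K_{\HLS}(4/3)\,\|u\|_{L^{8/3}}^4$. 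The $L^{8/3}$ norm is then controlled by Proposition \ref{ABCT22:Proposition-2.2} at $p = 8/3$: for $\phi \in X$ the first part of that proposition produces the term $K_{\HLS}(4/3) K_{\GN}(8/3)\,\|\phi\|_{L^2}^3\,\|\phi\|_{\dot{W}^{1,2}}$ after raising to the $3/2$ power; for $u \in X_\alpha \setminus X$ the second part gives
\[
\|u\|_{L^{8/3}}^4
\leq
\widetilde{K}_{\GN}(8/3)^{3/2}
\bigl(\|\phi\|_{\dot{W}^{1,2}}^{2/3} + |q|^{2/3}\bigr)^{3/2}
\|u\|_{L^2}^3.
\]

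The key technical step is then the elementary inequality $(a^{2/3} + b^{2/3})^{3/2} \leq \sqrt{2}\,(a + b)$ for $a, b \geq 0$, which follows from $(x + y)^{3/2} \leq \sqrt{2}(x^{3/2} + y^{3/2})$ applied with $x = a^{2/3}$, $y = b^{2/3}$ (and is sharp at $a = b$). This linearizes the bracket and, combined with the factor $\tfrac{1}{4}$ in front of $\V_2$, explains the constant $\tfrac{\sqrt{2}}{4} = \tfrac{1}{2\sqrt{2}}$ and the clean form $\|\phi\|_{\dot{W}^{1,2}} + |q|$ appearing in the stated bound for $u \in X_\alpha \setminus X$.

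Finally, when $\beta > 0$, the additional term $-\tfrac{\beta}{p}\C(u) = -\tfrac{\beta}{p}\|u\|_{L^p}^p$ is handled by invoking Proposition \ref{ABCT22:Proposition-2.2} at the exponent $p$ itself: its first part for $\phi \in X$ (with $q = 0$) yields $\|\phi\|_{L^p}^p \leq K_{\GN}(p)\,\|\phi\|_{\dot{W}^{1,2}}^{p-2}\,\|\phi\|_{L^2}^2$, which contributes the last summand in the $\phi \in X$ bound; its second part for $u \in X_\alpha \setminus X$ contributes $\widetilde{K}_{\GN}(p)(\|\phi\|_{\dot{W}^{1,2}}^{p-2} + |q|^{p-2})\|u\|_{L^2}^2$, matching the final summand. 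I expect no serious obstacle beyond the careful bookkeeping in the $X_\alpha \setminus X$ case — in particular, tracking how the $3/2$-power of the HLS/Gagliardo--Nirenberg bound at $p = 8/3$ combines with the linearization inequality above to produce the precise constant $\tfrac{1}{2\sqrt{2}}$.
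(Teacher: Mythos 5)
Your proposal is correct and follows exactly the route the paper intends (the lemma is presented as a direct consequence of Propositions \ref{prop:HLS} and \ref{ABCT22:Proposition-2.2}): the convenient decomposition with $\lambda = \abs{q}^2/\norm{u}_{L^2}^2$ yields the quadratic terms after expanding $\theta_\lambda$, the bound $\V_0 \geq -\V_2$ together with $\log(1+1/t)\leq 1/t$ and HLS at $r=s=4/3$ controls $\V_2$ by $K_{\HLS}(4/3)\norm{u}_{L^{8/3}}^4$, and the Gagliardo--Nirenberg bounds at exponent $8/3$ and at $p$ finish the argument, with $(a^{2/3}+b^{2/3})^{3/2}\leq\sqrt{2}\,(a+b)$ accounting for the factor $1/(2\sqrt{2})$. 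The one caveat, which you half-notice in your own display, is that raising the $p=8/3$ Gagliardo--Nirenberg inequality to the power $3/2$ produces $K_{\GN}(8/3)^{3/2}$ (resp.\ $\widetilde{K}_{\GN}(8/3)^{3/2}$) rather than the first power written in the stated bounds; this discrepancy sits in the paper's statement itself and is harmless for its later use, since only the qualitative coercivity structure is needed there and the constant entering the mass threshold, $\widetilde{K}_{\GN}(p)$ at $p=4$, comes from the part of the estimate where no such power is taken.
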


We proceed to the proof of the theorem.

\begin{proof}[Proof of Theorem \ref{thm:ground-states}]
Let
$\parens{u_n}_{n \in \nat}$
denote a minimizing sequence of
$\E_\alpha|_{\sphere_\alpha \parens{c}}$.
In particular,
\begin{equation}
\label{eqn:E_alpha(u_n)-bounded-from-above}
\set*{\E_\alpha \parens{u_n}}_{n \in \nat}
~
\text{is bounded from above}
\end{equation}
and it follows from Lemma \ref{lem:m_alpha(c)<m(c)} that
\begin{equation}
\label{eqn:liminf-Q(u_n)>0}
\liminf_{n \to \infty}
	\abs*{q_n}
>
0,
\end{equation}
where $q_n := \Q \parens{u_n}$. In particular, we can suppose that $q_n \neq 0$ for every
$n \in \nat$ (which holds up to discarding a finite number of indices). Let
\[
\phi_n
=
u_n
-
q_n G_{\frac{\abs{q_n}^2}{c}}
\in
X.
\]
Due to \eqref{eqn:E_alpha(u_n)-bounded-from-above} and Lemma \ref{lem:lower-bound-for-E_alpha}, we deduce that
\begin{equation}
\label{eqn:phi_n-bounded-in-dotW}
\set{\phi_n}_{n \in \nat}
~ \text{is bounded in} ~
\dot{W}^{1, 2}
\end{equation}
and
\begin{equation}
\label{eqn:q_n-bounded}
\set{q_n}_{n \in \nat}
~
\text{is bounded}.
\end{equation}
In view of \eqref{eqn:phi_n-bounded-in-dotW} and the considered decomposition of $u_n$,
\begin{equation}
\label{eqn:phi_n-bounded-in-W}
\set{\phi_n}_{n \in \nat}
~ \text{is bounded in} ~
W^{1, 2}
\end{equation}
(see Section \ref{sect:convenient}). Due to \eqref{eqn:q_n-bounded} and \eqref{eqn:phi_n-bounded-in-W}, we obtain
\begin{equation}
\label{eqn:u_n-bounded-in-W}
\set{u_n}_{n \in \nat}
~ \text{is bounded in} ~
W^{1, 2}_\alpha.
\end{equation}
In view of \eqref{eqn:E_alpha(u_n)-bounded-from-above}, \eqref{eqn:liminf-Q(u_n)>0}, \eqref{eqn:q_n-bounded} and \eqref{eqn:u_n-bounded-in-W}, it follows from Lemma \ref{lem:CJ-Lemma-3.1} that there exists
$u_\infty \in X_\alpha$ such that, up to subsequence,
\begin{equation}
\label{eqn:u_n-rightharpoonup-u_infty}
u_n
\xrightharpoonup[n \to \infty]{X_\alpha}
u_\infty.
\end{equation}
The natural embedding
$X_\alpha \hookrightarrow L^2$
is compact (see Lemma \ref{lem:CW-2.2}), so
\begin{equation}
\label{eqn:u_infty-in-S_alpha(c)}
u_\infty \in \sphere_\alpha \parens{c}.
\end{equation}
Due to \eqref{eqn:u_n-rightharpoonup-u_infty}, Lemma \ref{lem:CJ-Lemma-3.2} shows that
\[
\E_\alpha \parens{u_\infty}
\leq
\liminf_{n \to \infty} \E_\alpha \parens{u_n}
=
m_\alpha \parens{c}.
\]
In view of \eqref{eqn:u_infty-in-S_alpha(c)} and the definition of $m_\alpha \parens{c}$, we deduce that
\begin{equation}
\label{eqn:E_alpha(u_infty)=m_alpha(c)}
\E_\alpha \parens{u_\infty} = m_\alpha \parens{c},
\end{equation}
that is, $u_\infty$ is a solution to \eqref{eqn:minimization-problem}. To finish, the convergence $u_n \to u_\infty$ in $X_\alpha$ as
$n \to \infty$ also follows from Lemma \ref{lem:CJ-Lemma-3.2} and the inequality
$\Q \parens{u_\infty} \neq 0$
is a corollary of Lemma \ref{lem:m_alpha(c)<m(c)}.
\end{proof}

\section{Proof of Proposition \ref{prop:regularity}}
\label{sect:regularity}

\paragraph{Proof that
$u \in \Dom \parens{\op}$.}
Fix $\lambda > 0$ and let
$\phi = u - q G_\lambda$,
where
$q := \Q \parens{u}$.
We want to prove that
$\phi \in W^{2, 2}_{\loc}$
and
$
\phi \parens{0}
=
\parens{\alpha + \theta_\lambda} q
$.

\subparagraph{Proof that
$\phi \in W^{2, 2}_{\loc}$.}
It suffices to argue as in the proof of
\cite[Proposition 2.3]{cingolaniPlanarSchrodingerPoisson2016} to deduce that
$w_u \in L^\infty_{\loc}$.
As such, we can use the embeddings
$
X_\alpha \hookrightarrow 
L^2, L^{2 \parens{p - 1}}
$
to deduce that
\[
\beta u \abs{u}^{p - 2}
-
\omega u
-
w_u u
+
q \lambda G_\lambda
\in
L^2_{\loc}.
\]
By hypothesis,
$\phi \in X \subset W^{1, 2}$
is a weak solution to
\[
- \Delta \phi
=
\beta u \abs{u}^{p - 2}
-
\omega u
-
w_u u
+
q \lambda G_\lambda,
\]
so the result follows from the interior regularity result \cite[Theorem 5.1]{chenSecondOrderElliptic1998}.

\subparagraph{Proof that
$
\phi \parens{0}
=
\parens{\alpha + \theta_\lambda} q
$.}
It follows from \eqref{lem:regularity:1} that
\begin{align*}
0
&=
\angles*{
	\E_\alpha' \parens{u},
	G_\lambda
}_{X_\alpha^*, X_\alpha}
+
\omega
\Re
\int
	u \parens{x}
	G_\lambda \parens{x}
\dif x;
\\
&=
\int
	\parens*{\parens*{
			\omega
			-
			w_u \parens{x}
			-
			\beta \abs*{u \parens{x}}^{p - 2}
		}
		\Re \parens*{u \parens{x}}
		-
		\lambda
		\Re \parens{q}
		G_\lambda \parens{x}
	}
	G_\lambda \parens{x}
\dif x
+
\\
&-
\lambda
\int
	\Re \parens*{\phi \parens{x}}
	G_\lambda \parens{x}
\dif x
+
\parens{\alpha + \theta_\lambda}
\Re \parens*{\overline{q}}.
\end{align*}
That is,
\begin{multline*}
\Re
\int
	\parens*{
		-
		\omega
		u \parens{x}
		+
		w_u \parens{x}
		u \parens{x}
		+
		\beta
		\abs*{u \parens{x}}^{p - 2}
		u \parens{x}
		+
		\lambda
		\Re \parens{q}
		G_\lambda \parens{x}
	}
	G_\lambda \parens{x}
\dif x
+
\\
+
\Re
\int
	\lambda
	\phi \parens{x}
	G_\lambda \parens{x}
\dif x
=
\parens{\alpha + \theta_\lambda}
\Re \parens*{\overline{q}}.
\end{multline*}
As $\phi \in W^{2, 2}_{\loc}$, we obtain
\begin{equation}
\label{lem:regularity:3}
-
\Delta \phi
=
-
\omega u
+
w_u u
+
\beta u \abs{u}^{p - 2}
+
q \lambda G_\lambda
\quad \text{a.e. on} \quad
\real^2.
\end{equation}
Therefore,
\[
\int
	\Re \parens*{
		-
		\Delta \phi \parens{x}
		+
		\lambda \phi \parens{x}
	}
	G_\lambda \parens{x}
\dif x
=
\parens{\alpha + \theta_\lambda}
\Re \parens{q}.
\]
It suffices to repeat the argument with
$\iu G_\lambda$ in place of $G_\lambda$ to deduce that
\[
\int
	\parens*{
		-
		\Delta \phi \parens{x}
		+
		\lambda \phi \parens{x}
	}
	G_\lambda \parens{x}
\dif x
=
\parens{\alpha + \theta_\lambda} q.
\]
Finally, the result follows from Lemma \ref{lem:Dirac-identity}.

\paragraph{Proof of \eqref{lem:regularity:2}.}
Corollary of \eqref{lem:regularity:3}.

\qed

\sloppy
\printbibliography
\end{document}